\newcommand{\C}{\mathbb{C}}
\newcommand{\E}{E}
\newcommand{\F}{\mathcal F}
\newcommand{\HS}{\mathcal{H}}
\newcommand{\N}{\mathbb{N}}
\newcommand{\R}{\mathbb{R}}
\newcommand{\Z}{\mathbb{Z}}
\DeclareMathOperator{\Ran}{ran}
\DeclareMathOperator{\tr}{tr}
\DeclareMathOperator{\rk}{rk}
\newcommand{\id}{\mathrm{id}}
\newcommand{\norm}[1]{\lVert #1 \rVert}
\newcommand{\ip}[1]{\left< #1 \right>}
\def\sideremark#1{\ifvmode\leavevmode\fi\vadjust{\vbox to0pt{\vss
 \hbox to 0pt{\hskip\hsize\hskip1em
\vbox{\hsize2cm\tiny\raggedright\pretolerance10000
 \noindent #1\hfill}\hss}\vbox to8pt{\vfil}\vss}}}
\newtheorem{thm}{Theorem}[section]
\newtheorem{lemma}[thm]{Lemma}
\newtheorem{prop}[thm]{Proposition}
\newtheorem{cor}[thm]{Corollary}
\theoremstyle{remark}
\newtheorem{rmk}[thm]{Remark}
\theoremstyle{definition}
\numberwithin{equation}{section}
\begin{document}

\title[Projections and idempotents with fixed diagonal]{Projections and idempotents with fixed diagonal and
the homotopy problem \\ for unit tight frames}

\author[Giol, Kovalev, Larson, Nguyen, Tener]{Julien Giol, Leonid V. Kovalev, David Larson, Nga Nguyen and James E. Tener}

\address{Department of
Mathematics, Bucknell University, Lewisburg, PA 17837}
\email{julien.giol@bucknell.edu}

\address{Department of
Mathematics, 215 Carnegie Building, Syracuse University, Syracuse, NY 13244} \email{lvkovale@syr.edu}

\address{Department of
Mathematics, Texas A{\&}M University, College Station, TX 77843}
\email{larson@math.tamu.edu}

\address{Department of
Mathematics, Texas A{\&}M University, College Station, TX 77843}
\email{nnguyen@math.tamu.edu}

\address{Department of
Mathematics, 970 Evans Hall \#3840,
University of California, Berkeley, CA 94720}
\email{jtener@math.berkeley.edu}

\date{19 April 2010}
 \subjclass[2000]{47A05, 47L35, 47L05}

\keywords{projection, idempotent, normalized tight frame, diagonal, connected, paving}

\thanks{The fifth author was a participant in an NSF funded REU at Texas A\&M University in the
summer of 2008 in which the other authors were mentors. The fourth author was a doctoral student at Texas A\&M
University and teaching assistant for the REU when this work was accomplished. The second and third authors were
partially supported by grants from the NSF}

 \begin{abstract}
We investigate the topological and metric structure of the set of idempotent operators and projections which have
prescribed diagonal entries with respect to a fixed orthonormal basis of a Hilbert space.
As an application, we settle some cases of conjectures of Larson, Dykema, and Strawn on the connectedness of the set
of unit-norm tight frames.
\end{abstract}

\maketitle

\section{Introduction}

A finite unit-norm tight frame (FUNTF) is a finite sequence of unit vectors $(x_1,\dots,x_k)$ in an $n$-dimensional Hilbert space $\HS$ which has the
following reproducing property:
\begin{equation}\label{funtf1}
y=\frac{n}{k}\sum_{j=1}^{k} \ip{y,x_j} x_j \quad \text{ for all } y\in\HS.
\end{equation}
When $k=n$, the above defines an orthonormal basis in $\HS$. The redundancy inherent in the frames with $k>n$ makes them useful
in signal processing, as the original signal may be recovered after a partial loss in transmission. We refer to~\cite{BF,CK,DFKLOW,DS,GKK} for
background on FUNTF and to~\cite{HKLW,HL,SH} for the general theory of frames.

We denote the set of all $k$-vector unit-norm tight frames in an $n$-dimensional Hilbert space by $\F_{k,n}^{\C}$
or $\F_{k,n}^{\R}$ depending on the base field. When $k=n$, the topology of these sets is well understood.
Indeed, $\F_{n,n}^{\C}$ can be identified with the unitary group $U(n)$ and $\F_{n,n}^{\R}$ with the orthogonal group $O(n)$.
In particular, $\F_{n,n}^{\C}$ is pathwise connected while $\F_{n,n}^{\R}$ has two connected components.
Much less is known about the topology of frames with redundancy, i.e., with $k>n$. 
The third author conjectured in~\cite{Lreu} that $\F_{k,n}^{\C}$ is pathwise connected
whenever $k>n\ge 1$, or, equivalently, all $k$-vector unit-norm tight frames are homotopic.
Dykema and Strawn proved in~\cite{DS} that $\F_{k,1}^{\C}$ is pathwise connected for $k\ge 1$ and
$\F_{k,2}^{\R}$ is pathwise connected for $k\ge 4$. They conjectured that
$\F_{k,n}^{\R}$ is pathwise connected whenever $k\ge n+2\ge 4$.  They also showed that over either field,
the number of connected components remains the same when $n$ is replaced with $k-n$. The latter implies that
$\F_{k,k-1}^{\C}$ and $\F_{k,k-2}^{\R}$ are also pathwise connected. The other cases of the conjecture remained open.

The Grammian operator~\cite{HL} of a FUNTF is a scalar multiple of a projection with constant diagonal, see for instance
Corollary~2.6 in~\cite{DS} or Theorem~3.5 in~\cite{CK}.
Furthermore, $\F_{k,n}$ fibers over the set of projections in $B(\C^k)$ or $B(\R^k)$ with all diagonal entries equal to $n/k$.
The fibers are identified with the orthogonal group, which is connected in the complex case and has two connected components
in the real case. Thus, the topological structure of $\F_{k,n}$ is largely determined by the structure of the set of projections
with a fixed constant diagonal. The latter set is the subject of our first result. We denote by $M_n(\C)$ (resp. $M_n(\R)$) the set of all $n\times n$
matrices with complex (resp. real) entries. When the choice of $\C$ or $\R$ is unimportant, we write simply $M_n$.

\begin{thm}\label{halfproj} The set of projections in $M_{2n}(\C)$ with all diagonal entries equal to $1/2$ is pathwise
connected for all $n\ge 1$.
\end{thm}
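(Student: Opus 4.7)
Write $\mathcal P_n$ for the set of projections in $M_{2n}(\C)$ with constant diagonal $\tfrac{1}{2}$. The plan is to prove by induction on $n$ that every $P \in \mathcal P_n$ can be continuously joined inside $\mathcal P_n$ to the standard reference projection $P_0 = \tfrac{1}{2}\begin{pmatrix}I_n & I_n\\ I_n & I_n\end{pmatrix}$, which is easily checked to lie in $\mathcal P_n$.

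The base case $n=1$ is direct: any rank-one projection in $M_2(\C)$ with $P_{11}=P_{22}=\tfrac12$ has the form $\tfrac{1}{2}\begin{pmatrix}1 & e^{i\theta}\\ e^{-i\theta} & 1\end{pmatrix}$ for some $\theta\in\R$, so $\mathcal P_1$ is a circle and the parameter $\theta$ provides the desired path.

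For the inductive step with $n\ge 2$, write $P=\begin{pmatrix}A & B\\ B^* & D\end{pmatrix}$ with $A,D,B\in M_n(\C)$ and $A,D$ of constant diagonal $\tfrac12$. The goal is to construct a continuous path inside $\mathcal P_n$ from $P$ to a block-diagonal projection of the form $p\oplus P'$ in which $p\in M_2(\C)$ is a rank-one projection with diagonal $\tfrac12$ sitting in rows and columns $\{1,n+1\}$, and $P'\in M_{2(n-1)}(\C)$ is a rank-$(n-1)$ projection with constant diagonal $\tfrac12$ in the remaining indices. The inductive hypothesis then connects $P'$ to the analogous standard projection, the base case handles $p$, and concatenating these paths yields one from $P$ to $P_0$.

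The main obstacle is the block-diagonal reduction. The naive attempt---diagonalizing $B$ via block-diagonal unitaries $U_1\oplus U_2$ with $U_1,U_2\in U(n)$---conjugates $A\mapsto U_1 A U_1^*$ and $D\mapsto U_2 D U_2^*$ and so typically destroys the constant-diagonal condition; any valid deformation must preserve every diagonal entry along the entire path. An elementary approach is to work on coordinate pairs: use one-parameter families of $2\times 2$ unitary rotations acting on pairs of indices whose two diagonal entries of $P$ are both $\tfrac12$, carefully tuned so that these values are fixed throughout the rotation, and iterate such rotations to strip the off-diagonal mass linking a chosen rank-one block to the rest. A less elementary but striking alternative is to recognize $\mathcal P_n$ as a fiber of the moment map $P\mapsto\Diag(P)$ for the Hamiltonian action of the maximal torus of $PU(2n)$ on the compact connected K\"ahler manifold $\mathrm{Gr}(n,2n)$, whereupon connectedness of all nonempty moment-map fibers (Atiyah's theorem) immediately implies the conclusion.
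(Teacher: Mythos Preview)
Your proposal contains two distinct arguments, with different statuses.

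The second one---recognizing $\mathcal P_n$ as the fiber over $(\tfrac12,\dots,\tfrac12)$ of the moment map for the Hamiltonian action of the diagonal torus on the Grassmannian $\mathrm{Gr}(n,2n)$ and invoking Atiyah's theorem on connectedness of moment-map fibers---is correct and constitutes a complete proof on its own (the induction scaffolding is then superfluous). This is a genuinely different route from the paper's, which is elementary and constructive: the paper writes $P=\begin{pmatrix} a & b\\ b^* & d\end{pmatrix}$ with $a,b,d\in M_n$, uses Halmos's two-subspace analysis together with the trace identity $\tr a=\tr(\id-d)=n/2$ to show $\dim\ker b=\dim\ker b^*$, deduces $b=\sqrt{a(\id-a)}\,u$ for a \emph{full} unitary $u\in M_n$, and then linearly interpolates $a_t=(1-t)a+\tfrac t2\,\id$ to reach $\tfrac12\begin{pmatrix}\id & u\\ u^* & \id\end{pmatrix}$, finishing via connectedness of $U(n)$. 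The paper's argument yields an explicit two-stage homotopy and partially extends to infinite dimensions; your Atiyah argument is a one-line appeal to heavy symplectic machinery.

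Your elementary induction approach, by contrast, has a real gap. The entire content of the inductive step is the claim that $P$ can be deformed inside $\mathcal P_n$ to a block-diagonal $p\oplus P'$, and your only mechanism for this is ``$2\times2$ rotations on coordinate pairs, carefully tuned to fix the diagonal, iterated to strip the off-diagonal mass.'' But a rotation in coordinates $(i,j)$ preserves $P_{ii}=P_{jj}=\tfrac12$ only under a nontrivial constraint on the rotation (namely $\mathrm{Re}(u_{11}\overline{u_{12}}\,P_{ij})=0$), and you give no argument that the constrained moves available are sufficient to annihilate all entries linking a chosen $2\times2$ block to its complement---nor that doing so for one entry does not resurrect another. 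This decoupling is precisely the difficulty; the phrase ``iterate such rotations'' is not a proof. If you want an elementary argument, the paper's $n\times n$ block approach is the natural one: the key insight you are missing is that the equality $\dim\ker b=\dim\ker b^*$ (forced by the trace condition) lets you factor the off-diagonal block through a global unitary, after which a straight-line homotopy in $a$ preserves the diagonal automatically.
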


Theorem~\ref{halfproj} implies that $\F_{2n,n}^{\C}$ is connected for $n\ge 1$.
In the case of real scalars Theorem~\ref{halfproj} remains true if $n\ge 2$, see Remark~\ref{realcase}. Therefore,
$\F_{2n,n}^{\R}$ has at most two connected components when $n\ge 2$, and its quotient under the natural action of the orthogonal group in $\R^n$
is connected.



We denote by $M_n(\C)$ (resp. $M_n(\R)$) the set of all $n\times n$ matrices with complex (resp. real)
entries. When the choice of $\C$ or $\R$ is unimportant, we write simply $M_n$.
Let $D_n\subset M_n$ be the subalgebra of diagonal matrices.
There is a natural linear operator (conditional expectation) $E\colon M_n\to D_n$ which acts by erasing
off-diagonal entries. Theorem~\ref{halfproj} concerns the preimage of $(1/2)\id $ under the restriction of $E$ to
projections. It is natural to ask if preimages of other matrices are connected as well. We do not have a complete
answer, see however Theorem~\ref{amplification}. The image of the set of projections under $E$ was the subject of
recent papers by Kadison~\cite{Kad02, Kad02b}.

Theorem~\ref{pavableproj} provides a partial extension of Theorem~\ref{halfproj} to infinite-dimensional spaces, where
the notion of connectedness is understood in the sense of norm topology on the space of bounded operators.

Our second main result is a non-self-adjoint version of Theorem~\ref{halfproj}, which applies to idempotent matrices
with an arbitrary fixed diagonal.

\begin{thm}\label{idempotents} For every $d$ in $D_n(\C)$, the set of idempotents $q$ in $M_n(\C)$ such that $E(q)=d$ is pathwise connected.
\end{thm}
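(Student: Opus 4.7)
My plan is to prove the theorem by induction on the size $n$. The base case $n=1$ is immediate: the set of idempotents $q\in\C$ with $q = d$ is nonempty only when $d \in \{0,1\}$, and is then a singleton.

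For the inductive step, I first observe that $k := \tr(d)$ must be a non-negative integer, since $\tr(q) = \rk(q)$ for any idempotent; otherwise the set is empty and the claim holds vacuously. Writing an arbitrary idempotent $q \in M_n(\C)$ with $E(q) = d$ in block form with the last row and column separated,
\[
q = \begin{pmatrix} B & c \\ r & a \end{pmatrix}, \qquad B \in M_{n-1}(\C),
\]
where $a$ is the $(n,n)$ entry of $d$, $c \in \C^{n-1}$ is a column, and $r$ is a row vector, the relation $q^2 = q$ is equivalent to the system $B^2 + cr = B$, $(B - (1-a)\id)c = 0$, $r(B - (1-a)\id) = 0$, and $rc = a(1-a)$ (with $rc$ the scalar dot-product and $cr$ the rank-one outer product). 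The goal is to deform $q$, through idempotents with diagonal $d$, to a canonical form in which the upper-left block $B$ is itself an idempotent in $M_{n-1}(\C)$ with the appropriate diagonal, after which the inductive hypothesis applies.

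When $a \in \{0, 1\}$ we have $rc = 0$; conjugating by the diagonal family $D(t) = \Diag(1, \ldots, 1, e^t)$ preserves $E(q)$ while rescaling $c \mapsto e^{-t}c$ and $r \mapsto e^{t}r$, allowing one of $c, r$ to be driven to zero (the other being forced to zero by the eigenspace constraints). When $a \notin \{0, 1\}$, the constraint $rc = a(1-a) \neq 0$ forces both $c$ and $r$ to remain nonzero; here I would use the path-connectedness of the smooth irreducible quadric $\{(c, r) \in \C^{n-1} \times \C^{n-1} : rc = a(1-a)\}$ to continuously move $(c, r)$ to a standard \emph{rank-one-coupled} configuration that pairs the $n$-th index with one fixed other, reducing the problem to an $(n-2)$-dimensional subproblem.

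The main obstacle is that $(c, r)$ cannot be deformed independently of $B$: the identity $B - B^2 = cr$ must be maintained throughout, so $B$ must evolve in tandem with $(c, r)$ while preserving both its own diagonal and the eigenspace conditions $(B - (1-a)\id)c = 0$ and $r(B - (1-a)\id) = 0$. Building an explicit continuous family $(B(t), c(t), r(t))$ satisfying all these algebraic constraints --- and in particular handling the degenerate strata where $c$ or $r$ vanishes --- is the technical heart of the argument, which I expect can be done via a combination of diagonal conjugations, rank-one perturbations of $B$, and careful limiting arguments on the boundary of the non-degenerate locus.
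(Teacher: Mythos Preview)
Your inductive block-decomposition approach is genuinely different from the paper's, but the proposal contains a concrete error and leaves the central step unproven.

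\textbf{The error.} In the case $a\in\{0,1\}$ you claim that conjugation by $D(t)=\Diag(1,\dots,1,e^{t})$ lets you drive one of $c,r$ to zero. This is false when both are nonzero: the path $D(t)qD(t)^{-1}$ sends $c\mapsto e^{-t}c$ and $r\mapsto e^{t}r$, so if $r\neq 0$ then $\norm{r(t)}\to\infty$ and the path leaves $M_n$. Concretely, take
\[
q=\begin{pmatrix}1&1&1\\0&1&0\\0&-1&0\end{pmatrix},
\]
which is idempotent with diagonal $(1,1,0)$, hence $a=0$; here $c=(1,0)^{T}$ and $r=(0,-1)$ are both nonzero and $rc=0$, yet your scaling sends the $(3,2)$ entry to $-e^{t}\to-\infty$. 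Nothing in the eigenspace constraints forces either vector to vanish, so this step needs a different idea.

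\textbf{The gap.} For $a\notin\{0,1\}$ you yourself identify the difficulty---deforming $(c,r)$ along the quadric while simultaneously evolving $B$ so that $B-B^{2}=cr$, $(B-(1-a)\id)c=0$, $r(B-(1-a)\id)=0$, and $E(B)$ stays fixed---and then say you ``expect'' it can be done. This is precisely the substance of the theorem; without it the proposal is an outline, not a proof. In particular, the coupling of these constraints is global in nature (it depends on the full spectral decomposition of $B$, not just its diagonal), and it is not clear that rank-one perturbations and diagonal conjugations suffice.

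\textbf{How the paper proceeds instead.} The paper abandons induction on $n$ and works with the range projection $[q]$. The key invariant is the relative commutant $\{[q]\}'\cap D_n$. First, a finite sequence of affine moves (Proposition~\ref{reduction}) connects any $q$ to an idempotent whose range projection has \emph{trivial} relative commutant $\{[q]\}'\cap D_n=\C\,\id$. Second, such range projections form a path-connected open dense subset $\Omega$ of the Grassmannian (Lemma~\ref{complexgrassmannian}, using that the complement is a proper complex-analytic subset). Third, along any path $p_t$ in $\Omega$, Lemma~\ref{keylemma} shows the linear map $x\mapsto E(p_txp_t^{\perp})$ is surjective onto the trace-zero diagonals, so one can continuously choose $x_t$ with $E(p_t+p_tx_tp_t^{\perp})=d$. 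The Grassmannian viewpoint thus replaces your delicate coupled system with a single transversality statement that is uniform in $t$.
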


Naturally, the set of idempotents $q$ such that $E(q)=d$ is empty for some matrices $d$. Diagonal matrices of the form
$E(q)$ are characterized in Theorem~\ref{diagofidemp}. Our proof of Theorem~\ref{idempotents} involves several results
of independent interest. First, we characterize the diagonals of idempotents with given range
 in terms of the commutator of the range projection $[q]$ (Theorem~\ref{prescribedrange}). Specifically, the
characterization involves the relative commutator $\{[q]\}'\cap D_n$.
Along the way we obtain the following rigidity
result (Theorem~\ref{perturbationdistance}): for every $d\in D_n$ there exists $\epsilon>0$ such that the existence of
an idempotent $q$ with $\norm{E(q)-d}<\epsilon$ implies the existence of another idempotent $q_1$ with $E(q_1)=d$. A
perturbation argument is used to connect an arbitrary idempotent to an idempotent $q$ such that $\{[q]\}'\cap
D_n=\C\,\id$ while preserving the diagonal. Finally, we show that idempotents whose range projection has trivial
relative commutant form a path-connected set.

The paper concludes with Section~\ref{infsection}, where some of our results are extended to operators in separable
infinite-dimensional Hilbert spaces. Whether full analogues of Theorem~\ref{halfproj} and~\ref{idempotents} hold in
infinite dimensions remains open.

\section{Preliminaries}

\subsection{Projections as $2\times 2$ matrices}
\label{prelimproj}

The content of this section is well-known folklore. It is essentially contained in~\cite[Theorem 2]{Hal69}. We include
this discussion for the convenience of the reader, since it is the basis for our proof of Theorem \ref{halfproj}.

When a Hilbert space comes as an orthogonal direct sum of two Hilbert spaces, say $\HS=K\oplus L$, projections $p$ of $B(\HS)$ can be identified with those $2\times 2$ matrices
$$p=
\left(
\begin{matrix} a & b\cr b\sp* & d
\end{matrix}
\right)
$$
where $a,b,d$ are operators in $B(K),B(L,K),B(L)$ respectively, such that
\begin{enumerate}[(i)]
\item $0\leq a \leq \id$ and $0\leq d\leq \id\;;$
\item $|b\sp*|=\sqrt{a(\id-a)}$ and $|b|=\sqrt{d(\id-d)}\;;$
\item $ab=b(\id-d).$
\end{enumerate}

Then it is readily seen that  $\mbox{Ker }\;b\sp*=\mbox{Ker }\;bb\sp*=\mbox{Ker }a(\id-a)= \mbox{Ker }\;(\id-a)\oplus\mbox{Ker }\;a$, hence
$$K=\mbox{Ker }\;(\id-a)\oplus\mbox{Ker }\;a\oplus \left(\mbox{Ker }\;b\sp*\right)\sp\perp.$$
Likewise,
$$L=\mbox{Ker }\;d\oplus\mbox{Ker }\;(\id-d)\oplus \left(\mbox{Ker }\;b\right)\sp\perp.$$

According to these two decompositions, we can write
$$a=\id\oplus 0\oplus a' \;,\quad d=0\oplus \id\oplus d' \quad\mbox{and}\quad b= 0\oplus 0\oplus b',$$
where $b'$, for instance, denotes the restriction of $b$ to $\left(\mbox{Ker }\;b\right)\sp\perp$ which is injective and whose range is dense in $\left(\mbox{Ker }\;b\sp*\right)\sp\perp$.

There is a unique polar decomposition $b'=u'|b'|$, where $u'$ is an isometry from $\left(\mbox{Ker }\;b\right)\sp\perp$ onto $\left(\mbox{Ker }\;b\sp*\right)\sp\perp$. Note that (ii) and (iii) above entail $|b'|=\sqrt{d'(\id-d')}$ and $a'b'=b'(\id-d')$.  Hence $|b'|$ commutes with $d'$ and $a'u'|b'|=u'(\id-d')|b'|$. The range of $|b'|$ being dense in $\left(\mbox{Ker }\;b\right)\sp\perp$, it follows that
$$a'u'=u'(\id-d').$$
Thus the positive injective contractions $a'$ and $\id-d'$ are unitarily equivalent and the same statement holds for $\id-a'$ and $d'$.

\subsection{Diagonal conditional expectation and minimal block decomposition}\label{sec22}

Let $\HS$ be a separable Hilbert space and let us fix an orthonormal basis. Let $\{e_i\}_{i\in I}$ denote the corresponding set of rank one projections. An element $x$ of $B(\HS)$, i.e. a bounded linear operator on $\HS$, can be identified with its matrix with respect to this basis. It is then called diagonal if all of its off-diagonal entries are equal to zero, i.e $e_ixe_j=0$ whenever $i\neq j$. The set $D$ made of these diagonal elements is a maximal
abelian self-adjoint algebra in $B(\HS)$ (it is equal to its commutant). It comes with the so-called \emph{diagonal conditional expectation}
$$E:B(\HS)\to D$$
defined as the idempotent map which erases the off-diagonal entries.

Let $x$ in $B(\HS)$ be fixed and denote $\stackrel{x}{\sim}$ the smallest equivalence relation on $I$ such that $i\stackrel{x}{\sim} j$ whenever $e_ixe_j\ne 0$. Summing the projections $e_i$ over each equivalence class, we obtain an orthogonal decomposition of the unit  $\{f_j\}_{j\in J}$ within $D$. We call
$$x=\sum_{j\in J}xf_j$$
the \emph{minimal block decomposition} of $x$. By construction, the projections $f_j$ commute with $x$. More precisely, these are the minimal projections of the commutative von Neumann algebra $\{x\}'\cap D$. Note
$$\left(\{x\}'\cap D\right)'\simeq\prod_{j\in J} f_jB(\HS)f_j,$$
which justifies the terminology.

Our strategy for the proof of Theorem \ref{idempotents} consists in restricting ourselves to idempotents $q$ which share the same diagonal $E(q)=d$ and the property that $\{q\}'\cap D=\C \id$ or, equivalently, $\left(\{x\}'\cap D\right)'=B(\HS)$.

\section{Projections with diagonal 1/2}\label{parameterization}

\subsection{Proof of Theorem \ref{halfproj}}

\begin{proof}
Let $p$ in $M_{2n}(\C)$ be a projection such that  $E(p)=\id/2$ and write $p$ as a $2\times 2$ matrix
\begin{equation}\label{blockformproj}
p=
\left(
\begin{matrix} a & b\cr b\sp* & d\end{matrix}
\right)
\end{equation}
with coefficients $a,b,d$ in $M_n$. We will now use implicitly the preliminary remarks of \ref{prelimproj}.

By assumption on the diagonal, we have $\mbox{Tr}\;a=\mbox{Tr}\;(\id-d)=n/2$. Since the restriction of $a$ to $\left(\mbox{Ker }\;b\sp*\right)\sp\perp$ and that of  $\id-d$ to $\left(\mbox{Ker }\;b\right)\sp\perp$ are unitarily equivalent, they have equal trace and it follows that $\mbox{dim}\;\mbox{Ker}\;(\id-a)=\mbox{dim}\;\mbox{Ker}\;d$. Considering $\id-p$ instead, the same argument shows that $\mbox{dim}\;\mbox{Ker}\;a=\mbox{dim}\;\mbox{Ker}\;(\id-d)$. In particular, we see that the subspaces $\mbox{Ker}\;b$ and $\mbox{Ker}\;b\sp*$ have the same dimension, hence we can extend the unitary $u'$ to a unitary $u$ in $M_n$ such that
$$p=
\left(
\begin{matrix} a & \sqrt{a(\id-a)}u\cr u\sp *\sqrt{a(\id-a)} & u\sp * (\id-a)u
\end{matrix}
\right).
$$

Now if we put $a_t:=(\id-t)a+(t/2)\id$ in $B(K)$, it is easily seen that the formula
$$p_t:=
\left(
\begin{matrix} a_t & \sqrt{a_t(\id-a_t)}u\cr u\sp *\sqrt{a_t(\id-a_t)} & u\sp * (\id-a_t)u
\end{matrix}
\right).
$$
defines a projection-valued path connecting $p_0=p$ and
$$p_1=
\left(
\begin{matrix} \id/2 & u/2\cr u\sp */2 & \id/2
\end{matrix}
\right)\;,
$$
and such that $E(p_t)=1/2$ for all $t$. The main point is the latter assertion, which readily follows from the linearity of $E$ and the identities
$$a_t=(\id-t)a+t(1/2)\id\;,$$

$$ u\sp * (\id-a_t)u=(\id-t)u\sp *(\id-a) u+(t/2)\id.$$

Finally, by connectedness of the unitary group in $M_n(\C)$, every projection $p$ in $M_{2n}(\C)$ with diagonal $\id/2$
can be connected to
\[q= \left(
\begin{matrix} \id/2 & \id/2\cr \id/2 & \id/2
\end{matrix}
\right)\;. \qedhere\]
\end{proof}

\begin{rmk}\label{extendinfinite} Most of the proof of Theorem~\ref{halfproj} carries over to a separable
Hilbert space $\HS$ over real or complex scalars. Indeed, we used the assumption that
the space is finite-dimensional only to prove that the subspaces $\mbox{Ker } b$ and $\mbox{Ker } b^*$
are of the same dimension. (The unitary group in $B(\HS)$ is known to be path-connected
and even contractible~\cite{Ku65}.) Thus we have the following result:
if $\HS$ is decomposed into a direct sum $K\oplus L$,
then the set of all projections of form~\eqref{blockformproj}
with $E(p)=\id/2$ and $\mbox{dim}\;\mbox{Ker}\;b=\mbox{dim}\;\mbox{Ker}\;b^*$ is path-connected.
\end{rmk}

\begin{rmk}\label{realcase} The set of projections with diagonal $\id/2$ in $M_2(\R)$ is not connected, since it
consists of just two elements
\[\begin{pmatrix}
1/2 & \pm 1/2 \\ \pm 1/2 & 1/2
\end{pmatrix}\]
However, this set is path-connected in $M_{2n}(\R)$ for all $n>1$. Indeed, the unitary group splits into two components:
special unitary group and its complement. If the block $b$ in~\eqref{blockformproj}
is not invertible, then in the proof of Theorem~\ref{halfproj} we can choose the unitary $u$ to have determinant $1$ or $-1$.
Thus, the existence of a projection $p$ with $E(p)=\id/2$ and noninvertible $b$ implies the connectedness of the set.
Such a projection can be easily constructed by including the $4\times 4$ block
\[\begin{pmatrix}
1/2 & 1/2 & 0 & 0 \\
1/2 & 1/2 & 0 & 0 \\
0 & 0 & 1/2 & 1/2 \\
0 & 0 & 1/2 & 1/2
\end{pmatrix}.\]
\end{rmk}

\subsection{Explicit parametrization in $4$ dimensions}

We observe that there are three sets of projections in $M_4(\C)$ with diagonal $\id/2$. First, those whose all entries are non-zero can be parametrized by

$$p=
\left(
\begin{matrix}
1/2 & t_1\bar{\xi_1} & t_2\bar{\xi_2} & t_3\bar{\xi_3} \cr
t_1\xi_1 &  1/2 &  \mp i t_3 \xi_1\bar{\xi_2} & \pm i t_2\xi_1\bar{\xi_3} \cr
t_2 \xi_2 & \pm  i t_3\bar{\xi_1}\xi_2 & 1/2 & \mp i t_1\xi_2\bar{\xi_3} \cr
t_3 \xi_3 & \mp i t_2 \bar{\xi_1}\xi_3 & \pm i t_1\bar{\xi_2}\xi_3 & 1/2
\end{matrix}
\right)
$$
with $\sqrt{t_1^2+t_2^2+t_3^2}=1/2$, $t_j>0$, and $\xi_j$ in $\mathbb{T}$. Then come those with exactly four null entries:

$$p=
\left(
\begin{matrix}
1/2 & t_1\bar{\xi_1} & t_2\bar{\xi_2} & 0 \cr
t_1\xi_1 &  1/2 &  0 & t_2 \bar{\xi_3} \cr
t_2 \xi_2 & 0 & 1/2 & - t_1\bar{\xi_1}\xi_2\bar{\xi_3} \cr
0 & t_2 \xi_3 & - t_1\xi_1\bar{\xi_2}\xi_3 & 1/2
\end{matrix}
\right)
$$
with $\sqrt{t_1^2+t_2^2}=1/2$, $t_j>0$, $\xi_j$ in $\mathbb{T}$, and the two other families obtained by permutation of the basis. Finally, here are those which have eight null entries:

$$p=
\left(
\begin{matrix}
1/2 & \bar{\xi_1}/2 & 0 & 0 \cr
\xi_1/2 &  1/2 &  0 & 0 \cr
0 & 0 & 1/2 & \bar{\xi_2}/2 \cr
0 & 0 & \xi_2/2 & 1/2
\end{matrix}
\right)
$$
with $\xi_j$ in $\mathbb{T}$, and the two other families obtained by permutation of the basis.

It follows readily that the set of diagonal $1/2$ projections is pathwise connected in $M_4(\mathbb{C})$, giving us
an explicit, parametric proof of Theorem~\ref{halfproj} in that case.

In the real case, the latter set restricts to three sets of four projections. Also, there are no $4\times 4$ diagonal 1/2 projections whose entries are all non-zero real numbers. And those with four null entries split into twenty-four paths which connect the twelve extreme projections. For instance, for every $\epsilon_1,\epsilon_2,\epsilon_5,\epsilon_6$ in $\{\pm1\}$ such that $\epsilon_1\epsilon_2\epsilon_5\epsilon_6=-1$, the extreme projections
$$p=
\left(
\begin{matrix}
1/2 & \epsilon_1/2 & 0 & 0 \cr
\epsilon_1/2 &  1/2 &  0 & 0 \cr
0 & 0 & 1/2 & \epsilon_6/2 \cr
0 & 0 & \epsilon_6/2 & 1/2
\end{matrix}
\right)
$$
and
$$q=
\left(
\begin{matrix}
1/2 &0 & \epsilon_2/2 & 0 \cr
0 &  1/2 &  0 & \epsilon_5/2\cr
\epsilon_2/2 & 0 & 1/2 & 0 \cr
0 & \epsilon_5/2 & 0 & 1/2
\end{matrix}
\right)
$$
can be connected by the path
$$
\left(
\begin{matrix}
1/2 &\cos\theta\epsilon_1/2 & \sin\theta\epsilon_2/2 & 0 \cr
\cos\theta\epsilon_1/2 &  1/2 &  0 &  \sin\theta\epsilon_5/2\cr
\sin\theta\epsilon_2/2 & 0 & 1/2 & \cos\theta\epsilon_6/2 \cr
0 & \sin\theta\epsilon_5/2 & \cos\theta\epsilon_6/2 & 1/2
\end{matrix}
\right)
$$
with $\theta$ running from $0$ to $\pi/2$.

We let the reader  check that any two extreme projections can be connected by at most three paths of this type. In particular, diagonal 1/2 projections in $M_4(\mathbb{R})$ form a pathwise connected set.

\section{Further connectedness results for projections with fixed diagonal}

\subsection{Amplification of the $2\times 2$ case}

Here is a generalization of Theorem \ref{halfproj}. The proof is basically the same, so we only insist on the points that differ.

\begin{thm}\label{amplification} For every $d$ in $D_{2n}$ of the type $d=\cos^2\theta e + \sin^2\theta e\sp\perp$ with a rank $n$ projection $e$ in $D_{2n}$ and $\theta$ in $[0,\pi/2]$, the set of projections $p$ in $M_{2n}(\C)$ such that $E(p)=d$ is pathwise connected.
\end{thm}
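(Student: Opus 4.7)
My plan is to adapt the proof of Theorem~\ref{halfproj} by replacing the constant $1/2$ with $\cos^2\theta$ and $\sin^2\theta$ on the two diagonal blocks. Take $K=\Ran e$ and $L=\Ran e^\perp$ (both of dimension $n$) and, following Section~\ref{prelimproj}, write
$$p=\begin{pmatrix} a & b \\ b^* & d_2\end{pmatrix},$$
using $d_2$ for the operator block to avoid a clash with the diagonal matrix $d$. The hypothesis $E(p)=d$ translates to $E(a)=\cos^2\theta\cdot\id_K$ and $E(d_2)=\sin^2\theta\cdot\id_L$. The boundary cases $\theta\in\{0,\pi/2\}$ are trivial: a diagonal entry equal to $0$ or $1$ in a projection forces the corresponding basis vector to sit in the kernel or range, so the fiber over $d$ is a single point. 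Assume henceforth $\theta\in(0,\pi/2)$.

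The first step is the kernel-dimension bookkeeping. From $\tr a = n\cos^2\theta = \tr(\id-d_2)$, the unitary equivalence of $a'$ and $\id-d_2'$ from Section~\ref{prelimproj} gives $\tr a'=\tr(\id-d_2')$, hence $\dim\mbox{Ker}(\id-a)=\dim\mbox{Ker}\,d_2$. Applying the same reasoning to $\id-p$ yields $\dim\mbox{Ker}\,a=\dim\mbox{Ker}(\id-d_2)$, and summing these gives $\dim\mbox{Ker}\,b=\dim\mbox{Ker}\,b^*$. Thus the partial isometry $u'$ extends to a unitary $u\colon L\to K$, and we may write $b=\sqrt{a(\id-a)}\,u$ and $d_2=u^*(\id-a)u$.

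The homotopy is $a_t=(1-t)a+t\cos^2\theta\cdot\id_K$ together with
$$p_t=\begin{pmatrix} a_t & \sqrt{a_t(\id-a_t)}\,u \\ u^*\sqrt{a_t(\id-a_t)} & u^*(\id-a_t)u\end{pmatrix}.$$
Since $a_t$ is a convex combination of commuting positive contractions, it is itself a positive contraction, so conditions (i)--(iii) of Section~\ref{prelimproj} show that $p_t$ is a projection. Linearity of $E$ makes the top-left diagonal constantly $\cos^2\theta\cdot\id_K$. For the bottom-right block, a direct expansion gives $u^*(\id-a_t)u=(1-t)d_2+t\sin^2\theta\cdot\id_L$, whose diagonal is $\sin^2\theta\cdot\id_L$ throughout. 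Hence $E(p_t)=d$ for all $t$. At $t=1$ we reach the canonical form
$$p_1=\begin{pmatrix} \cos^2\theta\cdot\id & \cos\theta\sin\theta\cdot u \\ \cos\theta\sin\theta\cdot u^* & \sin^2\theta\cdot\id\end{pmatrix},$$
which connects to its $u=\id$ version by the path-connectedness of the unitary group in $M_n(\C)$. The only genuinely new point relative to Theorem~\ref{halfproj} is the preservation of the bottom-right diagonal along the homotopy, and I expect this to be the main (minor) obstacle, since it relies on the affine dependence of $u^*(\id-a_t)u$ in $t$ together with the identity $u^*u=\id_L$; no deeper idea is needed.
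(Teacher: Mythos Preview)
Your proof is correct and follows essentially the same approach as the paper's: decompose $\HS$ along $\Ran e\oplus\Ran e^\perp$, use the trace identities $\tr a=\tr(\id-d_2)=n\cos^2\theta$ together with the Section~\ref{prelimproj} structure theory to produce the unitary $u$, then run the linear homotopy $a_t=(1-t)a+t\cos^2\theta\cdot\id$ and finish with the connectedness of the unitary group. The paper's own proof is terser (it simply says to ``mimick'' Theorem~\ref{halfproj}), so your version actually spells out more, including the boundary cases and the affine verification $u^*(\id-a_t)u=(1-t)d_2+t\sin^2\theta\cdot\id_L$; the only implicit choice, shared with the paper, is that the extension of $u'$ to $u$ must send $\ker d_2\to\ker(\id-a)$ and $\ker(\id-d_2)\to\ker a$ so that $d_2=u^*(\id-a)u$ holds exactly.
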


\begin{proof}Up to a permutation, we can assume that $e$ is the projection onto the span of the first $n$ vectors of the canonical basis. Now let $p$ be a projection in $M_{2n}$, written as a $2\times 2$ matrix over $M_n$ like in the previous section. Since  $\mbox{Tr}\;a=\mbox{Tr}\;(\id-d)=n\cos^2\theta$ and  $\mbox{Tr}\;(\id-a)=\mbox{Tr}\;d=n\sin^2\theta$, there exists a unitary $u$ in $M_n$ such that
$$p=
\left(
\begin{matrix} a & \sqrt{a(\id-a)}u\cr u\sp *\sqrt{a(\id-a)} & u\sp * (\id-a)u
\end{matrix}
\right).
$$
Then we put $a_t:=(\id-t)a+t\cos^2\theta1$ and it simply remains to mimick the rest of the proof of Theorem \ref{halfproj}.
\end{proof}




\section{Diagonals of idempotents}

An idempotent is an operator which is equal to its square.
For $d$ in $D_n$ to be the diagonal of an idempotent $q$ in $M_n$, it is necessary that $\mbox{Tr}\;d = \mbox{rank}\;q$ belongs to the set of integers $\{0,1,\ldots,n\}$. Now is this sufficient? The cases $\mbox{Tr}\;d=0$ and $\mbox{Tr}\;d=n$ have to be treated separately. Since $0$ and $\id$ are the only idempotents with rank $0$ and $n$, respectively, it turns out that $0$ and $1$ are the only possible diagonals of idempotents with trace $0$ and $n$, respectively. The remainder of this section is devoted to proving that for every $d$ in $D_n$ with $\mbox{Tr}\;d$ in $\{1,\ldots,n-1\}$ there exists an idempotent $q$ in $M_n$ such that $E(q)=d$.

The case $\mbox{Tr}\;d=1$ is very easy. Let $\{d_1,\ldots,d_n\}$ denote the set of values on the diagonal of $d$ so that $\sum_{j=1}^{n}d_j=1$. Then consider for instance the matrix $q$ in $M_n$ which is defined by its entries $q_{i,j}:=d_i$. It is readily seen that $q$ is idempotent and that $E(q)=d$.

We proceed by induction on $k$.

Assume it has been proven that for all $n\geq k$ and for all $d$ in $D_n$ with $\mbox{Tr}\;d=k-1$ there exists an idempotent $q$ in $M_n$ such that $E(q)=d$. We now take $n\geq k+1$ and $d$ in $D_n$ with $\mbox{Tr}\;d=k$. Let $\{d_1,\ldots,d_n\}$ denote the set of values on the diagonal of $d$.

If $d_{j_0}=1$ for some $j_0$, then $\sum_{j\neq j_0} d_j=k-1$ and the induction hypothesis, together with an obvious splicing argument, help us find $q$.

Since $d\neq \id$,  there exist at least two indices $i,j$ such that $d_i+d_j\neq 2$ (otherwise, we find that $d_j=1$
for all $j$). Without loss of generality, we can assume that $d_1+d_2\neq 2$ and we put $\lambda:=(d_2-1)/(d_1+d_2-2)$.
Since $(d_1+d_2-1)+d_3+\cdots+d_n=k-1$, we can find by assumption an idempotent $r$ in $M_{n-1}$ such that $E(r)$ has
diagonal values $\{d_1+d_2-1, d_3,\ldots,d_n\}$. Now consider the idempotent
$$\widetilde{q}=
\left(
\begin{matrix} 1 & 0\cr 0 & r
\end{matrix}
\right)
=\left(
\begin{matrix} 1 & 0 & 0 \cr 0 & d_1+d_2-1 & 0 \cr 0&0&*
\end{matrix}
\right)$$
and the invertible element
$$\sigma
=\left(
\begin{matrix} \lambda & \lambda -1 & 0 \cr 1 & 1 & 0 \cr 0&0&\id
\end{matrix}
\right)$$
in $M_n$. Then a straightforward computation shows that the idempotent $q:=\sigma \widetilde{q}\sigma\sp{-1}$ has diagonal $d$.

Thus we have proved:

\begin{thm}\label{diagofidemp}Let $d$ be in $D_n$. Then $d$ is the diagonal of an idempotent in $M_n$ if and only if one of the following holds:
\begin{enumerate}[(i)]
\item $d=0$;
\item $d=\id$;
\item $\mbox{Tr}\;d$ belongs to $\{1,\ldots,n-1\}$.
\end{enumerate}
\end{thm}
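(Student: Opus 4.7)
The plan is to establish the two implications separately. For necessity, any idempotent $q$ satisfies $\mbox{Tr}\;q=\mbox{rank}\;q\in\{0,1,\ldots,n\}$, so the same must hold for $\mbox{Tr}\;d$. In the extreme cases $\mbox{Tr}\;d=0$ or $\mbox{Tr}\;d=n$, the only idempotents with the corresponding rank are $0$ and $\id$, whose diagonals are $0$ and $\id$ respectively. This handles (i), (ii), and rules out all other diagonals when the trace is $0$ or $n$.

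For sufficiency when $k:=\mbox{Tr}\;d\in\{1,\ldots,n-1\}$, I would induct on $k$. The base case $k=1$ admits a very clean construction: given $d$ with $\sum_i d_i=1$, define $q\in M_n$ by $q_{ij}:=d_i$ (the matrix has identical columns, each equal to the diagonal of $d$). Then $(q^2)_{ij}=\sum_\ell d_i d_\ell = d_i = q_{ij}$, so $q$ is idempotent and $E(q)=d$ by construction.

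For the inductive step, assume the statement for trace $k-1$ in every dimension $\geq k$, and let $d\in D_n$ with $n\geq k+1$ and $\mbox{Tr}\;d=k$. I would split into two cases. If some $d_{j_0}=1$, the remaining entries sum to $k-1$ and define an element of $D_{n-1}$; the induction hypothesis yields an idempotent which, together with a $1$ inserted in position $j_0$, gives the desired idempotent in $M_n$. If no entry equals $1$, then certainly not all $d_j$ equal $1$, and a short averaging argument shows one can find two indices whose diagonal entries sum to something other than $2$ (otherwise adding pairwise sums forces every $d_j=1$, a contradiction). Reindex so that $d_1+d_2\neq 2$, apply the induction hypothesis to the diagonal $(d_1+d_2-1, d_3,\ldots, d_n)\in D_{n-1}$ (which has trace $k-1$), and obtain an idempotent $r\in M_{n-1}$.

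The main obstacle, and the heart of the argument, is to manufacture an idempotent in $M_n$ with diagonal $d$ out of the block idempotent $\widetilde{q}=\mbox{diag}(1,r)$, whose first two diagonal entries are $1$ and $d_1+d_2-1$. The idea is to conjugate by a similarity of the form $\sigma=\tau\oplus\id_{n-2}$, where $\tau\in GL_2$ redistributes a diagonal of the form $(1,d_1+d_2-1)$ into $(d_1,d_2)$; note that the sum is preserved automatically by trace invariance, so only one equation needs to be enforced. A one-parameter family $\tau=\bigl(\begin{smallmatrix}\lambda & \lambda-1\\ 1 & 1\end{smallmatrix}\bigr)$ is a natural candidate, and setting the $(1,1)$ entry of $\tau\,\mbox{diag}(1,d_1+d_2-1)\,\tau^{-1}$ equal to $d_1$ gives the explicit value $\lambda=(d_2-1)/(d_1+d_2-2)$, well defined precisely because $d_1+d_2\neq 2$. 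Since conjugation by $\sigma$ affects only the first two diagonal entries of $\widetilde{q}$ (the lower-right $(n-2)\times(n-2)$ block is unchanged), the resulting idempotent $q:=\sigma\widetilde{q}\sigma^{-1}$ has diagonal $d$, completing the induction.
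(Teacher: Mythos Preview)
Your proposal is correct and follows essentially the same route as the paper: the same necessity argument, the same rank-one base case $q_{ij}=d_i$, and the same inductive step that conjugates $\widetilde q=\mathrm{diag}(1,r)$ by $\sigma=\bigl(\begin{smallmatrix}\lambda & \lambda-1\\ 1 & 1\end{smallmatrix}\bigr)\oplus \id_{n-2}$ with $\lambda=(d_2-1)/(d_1+d_2-2)$. Your justification that conjugation by $\sigma$ leaves the lower-right $(n-2)\times(n-2)$ diagonal block untouched is exactly the ``straightforward computation'' the paper alludes to.
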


\section{Idempotents with prescribed range and diagonal}

Throughout this section, we will work with the conditional expectation $E$ from $M_n$ onto $D_n$, the set of diagonal $n\times n$ matrices. Given an idempotent $p$, recall that the set of idempotents which have the same range as $p$ is equal to the affine subset $p+pM_np\sp\perp$, where $p\sp\perp=\id-p$. We will now investigate the intersections of the latter with the preimages $E\sp{-1}(d)$.

First we determine the range of the linear operator $x\longmapsto E(pxp\sp\perp)$.
The following lemma should be compared to Lemma~4.2 in~\cite{DS}, which concerns the rank of the differential of the conditional expectation
on the Grassmannian manifold.

\begin{lemma}\label{keylemma}Let $p$ be a projection in $M_n$ with minimal block decomposition $p=\sum_{j=1}^{s}pf_j$. Then
$$E(pM_np\sp\perp)=\{d\in D_n\;:\; \mbox{Tr} \;df_j=0,\;1\leq j\leq s\}.$$
\end{lemma}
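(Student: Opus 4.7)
\medskip

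My plan is to prove the lemma by computing the orthogonal complement of $E(pM_np^\perp)$ inside $D_n$ with respect to the Hilbert--Schmidt inner product $\langle A,B\rangle=\tr(AB^*)$, and then dualizing. The key observation is that the map $T\colon M_n\to D_n$ defined by $T(x)=E(pxp^\perp)$ has an easily identifiable adjoint: for $d\in D_n$, since $E$ is self-adjoint with respect to the trace pairing against diagonal matrices, we get $\tr(E(pxp^\perp)d^*)=\tr(pxp^\perp d^*)=\tr(x\cdot p^\perp d^*p)$, so $T^*(d)=pdp^\perp$. Hence $\Ran(T)^\perp=\ker T^*=\{d\in D_n:pdp^\perp=0\}$.

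The easy inclusion ($\subseteq$) I would record first as a sanity check: since $f_j\in\{p\}'\cap D_n$, we have $p^\perp f_j=f_j p^\perp$, and therefore $\tr(E(pxp^\perp)f_j)=\tr(pxp^\perp f_j)=\tr(pxf_jp^\perp)=\tr(p^\perp px f_j)=0$. Thus every element of $E(pM_np^\perp)$ is trace-orthogonal to each $f_j$.

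For the reverse inclusion I would prove the following algebraic fact, which I expect to be the main step: for a diagonal $d\in D_n$, the identity $pdp^\perp=0$ is equivalent to $d\in\{p\}'\cap D_n$. One direction is obvious. Conversely, $pdp^\perp=0$ rewrites as $pd=pdp$; taking adjoints and using that $D_n$ is $*$-closed gives $pd^*p^\perp=0$ as well. Decomposing $d=d_R+id_I$ into self-adjoint diagonal real and imaginary parts, each $d_R,d_I$ satisfies both $pd_\bullet p^\perp=0$ and (by taking adjoints of a self-adjoint equation) $p^\perp d_\bullet p=0$, so $d_\bullet$ commutes with $p$; thus $d=d_R+id_I$ commutes with $p$. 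This is the only delicate point because it requires handling the non-self-adjoint diagonal case carefully.

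Once this is established, the description of the minimal block decomposition from Section~\ref{sec22} identifies $\{p\}'\cap D_n$ as the linear span of the minimal projections $f_1,\dots,f_s$. Therefore $\ker T^*=\spn_{\C}\{f_1,\dots,f_s\}$, and taking the orthogonal complement in $D_n$ (which is finite-dimensional, so no closure issues) yields
\[
E(pM_np^\perp)=\Ran(T)=(\ker T^*)^\perp=\{d\in D_n:\langle d,f_j\rangle=0,\ 1\le j\le s\}=\{d\in D_n:\tr(df_j)=0,\ 1\le j\le s\},
\]
where in the last equality we used $f_j^*=f_j$. This completes the proof.
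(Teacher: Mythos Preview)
Your overall strategy is exactly the paper's: use the Hilbert--Schmidt inner product on $M_n$, compute the adjoint of $T(x)=E(pxp^\perp)$ as $T^*(d)=pdp^\perp$, and identify $\ker T^*$ with $\{p\}'\cap D_n$. The easy inclusion and the final dualization are fine.

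The gap is in the sentence ``taking adjoints and using that $D_n$ is $*$-closed gives $pd^*p^\perp=0$ as well.'' Taking the adjoint of $pdp^\perp=0$ yields $p^\perp d^* p=0$, not $pd^*p^\perp=0$; these are different conditions, and the fact that $d^*\in D_n$ does not convert one into the other. Without $pd^*p^\perp=0$ you cannot conclude that $pd_Rp^\perp=0$ and $pd_Ip^\perp=0$ separately, because $pd_Rp^\perp$ and $pd_Ip^\perp$ are not self-adjoint matrices (so the single complex equation $pd_Rp^\perp+ipd_Ip^\perp=0$ does not split).

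The paper closes precisely this gap: from $pdp^\perp=0$ one sees that $\Ran p^\perp$ is invariant under $d$; since $d$ is diagonal (hence has finite spectrum), Lagrange interpolation gives a polynomial $g$ with $d^*=g(d)$, and then $\Ran p^\perp$ is invariant under $g(d)=d^*$ as well, i.e.\ $pd^*p^\perp=0$. From there your real/imaginary decomposition (or just combining $pdp^\perp=0$ with the adjoint of $pd^*p^\perp=0$) finishes the argument. So your proof becomes correct once you insert this normality/interpolation step in place of the erroneous adjoint claim.
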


\begin{rmk}\label{hilbertschmidt}If we give $M_n(\C)$ its Hilbert-Schmidt (or Euclidean) structure via the inner product $\mbox{Tr}\; a\sp*b$, then the previous lemma can be restated by saying that $E(pM_np\sp\perp)$ is equal to the orthogonal complement of $\{p\}'\cap D_n$ in $D_n$.
\end{rmk}

\begin{proof}Let $d$  belong to $E(pM_np\sp\perp)$, say $d=E(pxp\sp\perp)$. Then for all $j=1\ldots,s$ we have $\mbox{Tr}\;df_j=\mbox{Tr}\;pxp\sp\perp=\mbox{Tr}\;p\sp\perp f_jpx$ by commutativity of the trace, hence $\mbox{Tr}\;df_j=0$ since $p\sp\perp f_j p=p\sp\perp p f_j=0$. Thus $E(pM_np\sp\perp)$ is contained in $D_n$ and is orthogonal to the span of the $f_j$'s, namely  $\{p\}'\cap D_n$.\\
Now let $d$ in $D_n$ be orthogonal to $E(pM_np\sp\perp)$.  This means that for all $x$ in $M_n$, we have $\mbox{Tr}\; (pxp\sp\perp)\sp*d=\mbox{Tr}\;x\sp*pdp\sp\perp=0$. Hence $pdp\sp\perp=0$, i.e. the range of $p$ is invariant under $d$. By Lagrange interpolation, we can find a polynomial $g$ such that $d\sp*=g(d)$. Thus $pd\sp*p\sp\perp=pg(d)p\sp\perp=0$. It follows that $d$ commutes with $p$, hence $d$ belongs to $\{p\}'\cap D_n$ and the proof is complete.
\end{proof}

Given a projection $p$, we characterize the diagonals which can be realized as the diagonal of an idempotent with the same range as $p$.

\begin{thm}\label{prescribedrange}Let $p$ be a projection in $M_n$ with minimal block decomposition $p=\sum_{j=1}^{s}pf_j$ . For every diagonal matrix $d$ in $D_n$, the following assertions are equivalent:
\begin{enumerate}[(i)]
\item $d$ belongs to $E(p+pM_np\sp\perp)$;
\item $\mbox{Tr}\; df_j=\mbox{rank}\;pf_j$ for $j=1,\ldots,s$.
\end{enumerate}
\end{thm}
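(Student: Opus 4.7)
The plan is to reduce the statement to Lemma~\ref{keylemma} by a direct computation. Every idempotent with the same range as $p$ has the form $q=p+pxp\sp\perp$ for some $x\in M_n$, so the set $E(p+pM_np\sp\perp)$ is exactly the affine translate $E(p)+E(pM_np\sp\perp)$. Thus a diagonal $d$ satisfies (i) if and only if $d-E(p)\in E(pM_np\sp\perp)$.

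Next I would apply Lemma~\ref{keylemma} to rewrite the condition $d-E(p)\in E(pM_np\sp\perp)$ as $\tr\bigl((d-E(p))f_j\bigr)=0$ for each $j=1,\dots,s$; equivalently,
\[
\tr(df_j)=\tr\bigl(E(p)f_j\bigr),\qquad 1\le j\le s.
\]
Since $f_j\in D_n$ and $E$ is the diagonal conditional expectation, we have $\tr\bigl(E(p)f_j\bigr)=\tr(pf_j)$ (both sides equal $\sum_i p_{ii}(f_j)_{ii}$). This converts the identity into $\tr(df_j)=\tr(pf_j)$.

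To finish, I observe that $pf_j$ is itself a projection: $p$ and $f_j$ commute by construction of the minimal block decomposition, and both are self-adjoint idempotents, so $(pf_j)^*=f_jp=pf_j$ and $(pf_j)^2=p^2f_j^2=pf_j$. For a projection, the trace equals the rank, hence $\tr(pf_j)=\rk(pf_j)$, and the condition becomes exactly (ii).

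There is essentially no obstacle beyond correctly using Lemma~\ref{keylemma}; the only thing to keep track of is the book-keeping that $E(p)f_j$ and $pf_j$ have the same trace (trivial because $f_j$ is diagonal) and that $pf_j$ is a genuine orthogonal projection, which uses the commutativity $pf_j=f_jp$ provided by the minimal block decomposition.
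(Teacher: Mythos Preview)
Your proof is correct and follows exactly the same route as the paper: reduce (i) to $d-E(p)\in E(pM_np^\perp)$, invoke Lemma~\ref{keylemma}, and then use $\tr\bigl(E(p)f_j\bigr)=\tr(pf_j)=\rk(pf_j)$. You have simply spelled out in more detail the two facts the paper states without justification (that $f_j$ diagonal gives $\tr(E(p)f_j)=\tr(pf_j)$, and that $pf_j$ is a projection so its trace equals its rank).
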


\begin{proof}The first assertion says that $d-E(p)$ belongs to $E(pM_np\sp\perp)$. By Lemma \ref{keylemma}, this is equivalent to the fact that $\mbox{Tr}\;(d-E(p))f_j=0$ for $j=1\ldots,s$. And since $\mbox{Tr}\;E(p)f_j=\mbox{Tr}\;pf_j=\mbox{rank}\;pf_j$, we get the equivalence with the second assertion.
\end{proof}

The case of diagonal $\id/2$ being our original motivation, let us restate the previous result in this particular situation.

\begin{cor}\label{prescribedforhalf}Let $p$ be a projection in $M_{2n}$ with minimal block decomposition $p=\sum_{j=1}^{s}pf_j$. Then there is a diagonal $\id/2$ idempotent in $M_{2n}$ with range equal to that of $p$ if and only if $\mbox{rank}\;f_j=2\mbox{rank}\;pf_j$ for $j=1,\ldots,s$.
\end{cor}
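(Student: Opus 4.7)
The plan is to deduce this immediately from Theorem~\ref{prescribedrange} by specializing $d = \id/2 \in D_{2n}$. By that theorem, the existence of an idempotent $q$ in $M_{2n}$ with $\Ran q = \Ran p$ and $E(q) = \id/2$ is equivalent to the numerical condition $\mbox{Tr}\,(\id/2)f_j = \mbox{rank}\,pf_j$ for each $j = 1,\ldots,s$.

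The only verification needed is that $\mbox{Tr}\,(\id/2)f_j = (1/2)\mbox{rank}\,f_j$. This is true because each $f_j$, as defined in the minimal block decomposition of Section~\ref{sec22}, is a sum of rank-one diagonal projections $e_i$, hence is itself a projection in $D_{2n}$; in particular its trace coincides with its rank. Substituting gives $(1/2)\mbox{rank}\,f_j = \mbox{rank}\,pf_j$, and clearing the factor $1/2$ yields exactly the stated equality $\mbox{rank}\,f_j = 2\,\mbox{rank}\,pf_j$.

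There is no real obstacle; the corollary is a one-line specialization of the preceding theorem. It is worth isolating because it makes the structural constraint transparent: for the half-diagonal $\id/2$ to be realizable within the affine family $p + pM_{2n}p^\perp$, each block $f_j$ of the minimal block decomposition must have even rank, and the range of $p$ must intersect that block in a subspace of exactly half its dimension.
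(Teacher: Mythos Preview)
Your proposal is correct and is exactly the approach the paper intends: the corollary is stated there as a direct specialization of Theorem~\ref{prescribedrange} to $d=\id/2$, with no separate proof given. Your computation $\mbox{Tr}\,((\id/2)f_j)=(1/2)\,\mbox{rank}\,f_j$ is the only thing to check, and it is immediate since each $f_j$ is a diagonal projection.
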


Now that we have characterized the diagonals that belong to $E(p+pM_np\sp\perp)$, we will give a uniform lower estimate of the distance between a diagonal and the closed affine subspaces $E(p+pM_np\sp\perp)$ of $M_n$ which do not contain it.

\begin{thm}\label{perturbationdistance}Let $d$ be a diagonal in $D_n$. Let $S$ be the set of all possible sums of diagonal elements of $d$, i.e. the set of all $\mbox{Tr}\;(de)$ when $e$ runs over all diagonal projections. Put $\gamma:=1$ if $S$ is contained in $\Z$ and $\gamma:=\mbox{dist}(S\setminus\Z,\Z)$ otherwise. Then for all projections $p$ in $M_{2n}$, we have the following alternative: either $d$ belongs to $E(p+pM_np\sp\perp)$ or
$$\mbox{dist}(d,E(p+pM_np\sp\perp))\geq \frac{\gamma}{\lfloor n/2\rfloor}.$$
\end{thm}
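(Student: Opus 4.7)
The plan is to combine the explicit description of $E(p+pM_np^\perp)$ coming from Theorem~\ref{prescribedrange} with the arithmetic gap between $S$ and $\Z$, reducing the distance estimate to a blockwise optimization in the operator norm.

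First, I would decompose $p = \sum_{j=1}^{s} pf_j$ into its minimal block decomposition. The $f_j$ are pairwise orthogonal diagonal projections whose supports $I_j$ partition the index set. By Theorem~\ref{prescribedrange},
\[
E(p+pM_np^\perp) = \left\{ d' \in D_n : \operatorname{Tr}(d'f_j) = \operatorname{rank}(pf_j),\ j=1,\ldots,s \right\},
\]
an affine subspace cut out by $s$ linear constraints, each supported on a single $I_j$. I would then set $s_j := \operatorname{Tr}(df_j) - \operatorname{rank}(pf_j)$ and perform the operator-norm distance computation blockwise: on each block one independently minimizes $\max_{i \in I_j}|\eta_i|$ subject to $\sum_{i \in I_j}\eta_i = s_j$. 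A Chebyshev-type argument gives optimum $|s_j|/|I_j|$, realized by equidistribution, so
\[
\operatorname{dist}(d, E(p+pM_np^\perp)) = \max_{1\leq j\leq s} \frac{|s_j|}{|I_j|}.
\]

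Next I would invoke the arithmetic gap. If $d \notin E(p+pM_np^\perp)$ then some $s_{j^*} \neq 0$. Since $\operatorname{Tr}(df_{j^*})$ is a sum of diagonal entries of $d$, it lies in $S$, while $\operatorname{rank}(pf_{j^*}) \in \Z$. Splitting on whether $\operatorname{Tr}(df_{j^*})$ is an integer: in one case $s_{j^*}$ is a nonzero integer and $|s_{j^*}|\geq 1 \geq \gamma$; in the other, $\operatorname{Tr}(df_{j^*})\in S\setminus\Z$ and $|s_{j^*}|\geq \operatorname{dist}(S\setminus\Z,\Z)=\gamma$ by the very definition of $\gamma$. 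In either case $|s_{j^*}|\geq \gamma$.

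The main obstacle is turning this into the stated denominator $\lfloor n/2\rfloor$. The naive block-size bound $|I_{j^*}|\leq$ (matrix dimension) only gives distance $\geq \gamma/n$. To sharpen to $\gamma/\lfloor n/2\rfloor$, I would use that block sizes sum to the matrix dimension so at most one block has size exceeding $\lfloor n/2\rfloor$: if a bad block of size $\leq \lfloor n/2\rfloor$ exists the estimate applies directly, while if the unique bad block is the large one then the identity $\sum_j s_j = \operatorname{Tr}(d)-\operatorname{rank}(p)$ combined with the requirement that every other $s_j$ vanish forces $|s_{j^*}|$ to absorb an additional integer factor, which compensates the larger denominator. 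Making this combinatorial case analysis precise -- in particular keeping careful track of when $|s_{j^*}|$ is exactly $\gamma$ versus a multiple of it -- is, in my view, the delicate point of the argument.
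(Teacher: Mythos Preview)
Your steps 1 through 3 are exactly the argument the paper begins: take the minimal block decomposition, invoke Theorem~\ref{prescribedrange} to identify a block $j^*$ with $\operatorname{Tr}(df_{j^*})\neq\operatorname{rank}(pf_{j^*})$, and then bound $|s_{j^*}|\ge\gamma$ by the definition of $\gamma$. The paper's own proof is in fact truncated after that first sentence, so your blockwise distance formula $\operatorname{dist}=\max_j |s_j|/|I_j|$ and the resulting estimate $\operatorname{dist}\ge\gamma/|I_{j^*}|\ge\gamma/n$ are precisely the natural completion.

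The genuine gap is step 4. Your mechanism for upgrading the denominator from the ambient dimension to $\lfloor n/2\rfloor$ does not work. When the unique bad block is the large one and all other $s_j$ vanish, the identity $\sum_j s_j=\operatorname{Tr}(d)-\operatorname{rank}(p)$ only tells you that $s_{j^*}$ is again a difference of an element of $S$ and an integer, so $|s_{j^*}|\ge\gamma$ --- with no ``additional integer factor''. Concretely, in $M_4$ take $d=\operatorname{diag}(0.3,0.3,0.2,0)$ and let $p$ be a rank-one projection $vv^*$ with $v$ supported on the first three coordinates and having no zero entries, extended by $0$ on $e_4$. Then the minimal blocks have sizes $3$ and $1$, one computes $\gamma=0.2$, the only bad block is the large one with $|s_{j^*}|=0.2$, and $\operatorname{dist}=0.2/3<0.1=\gamma/\lfloor 4/2\rfloor$. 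So the inequality with denominator $\lfloor n/2\rfloor$ is simply false in general; note that the theorem statement already contains visible typos (it places $d\in D_n$ but $p\in M_{2n}$), and the denominator appears to be one of them. What your argument through step 3 actually proves is the bound $\gamma/n$ with $n$ the matrix size, and that is the estimate one should carry forward.
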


\begin{proof}Suppose that $d$ does not belong to $E(p+pM_np\sp\perp)$ and let  $p=\sum_{j=1}^{s}pf_j$ be the minimal block decomposition of $p$. By Theorem \ref{prescribedrange}, there is one $j$ such that  $\mbox{Tr}\; df_j\neq\mbox{rank}\;pf_j$.
\end{proof}

Again, we find it worth restating the result above in the special case of diagonal $\id/2$, in a slightly different form.

\begin{cor}\label{distanceforhalf}Let $q$ be an idempotent in $M_{2n}$. If $\|E(q)-1/2\|<\frac{1}{n}$, then there exists an idempotent $\widetilde{q}$ with diagonal $\id/2$ and with range equal to that of $q$.
\end{cor}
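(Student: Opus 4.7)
The plan is to deduce the corollary directly from Theorem~\ref{perturbationdistance} by taking the range projection $p := [q]$ and the diagonal $d = \id/2 \in D_{2n}$, and showing that the hypothesis $\|E(q)-\id/2\|<1/n$ forces the first alternative in the dichotomy of that theorem.

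First I would record the elementary observation that the set of idempotents with range equal to $\Ran(p)$ is the affine subset $p + pM_{2n}p^\perp$; in particular $q$ itself belongs to it. Indeed, $p = [q]$ yields $pq = q$ and $qp = p$, hence $q - p = pq(\id - p) \in pM_{2n}p^\perp$. Applying $E$ gives $E(q) \in E(p + pM_{2n}p^\perp)$, so
\[
\dist\bigl(\id/2,\; E(p + pM_{2n}p^\perp)\bigr) \;\leq\; \|E(q) - \id/2\| \;<\; \tfrac{1}{n}.
\]

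Next I would evaluate the constant $\gamma$ that Theorem~\ref{perturbationdistance} attaches to $d = \id/2$. As $e$ runs over diagonal projections in $M_{2n}$, the trace $\mbox{Tr}\bigl((\id/2)\,e\bigr) = \tfrac{1}{2}\mbox{Tr}(e)$ sweeps out the set $S = \{0, \tfrac12, 1, \tfrac32, \dots, n\}$, whose non-integer elements sit at distance exactly $\tfrac12$ from $\Z$; so $\gamma = 1/2$. Combined with the elementary inequality $\lfloor n/2 \rfloor \leq n/2$, this yields the lower bound $\gamma/\lfloor n/2\rfloor \geq 1/n$ promised by the theorem.

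Putting these together, Theorem~\ref{perturbationdistance} forces either $\id/2 \in E(p + pM_{2n}p^\perp)$ or $\dist(\id/2, E(p + pM_{2n}p^\perp)) \geq 1/n$, and the displayed inequality above rules out the second alternative. Hence there exists $x \in M_{2n}$ with $E(p + pxp^\perp) = \id/2$, and $\widetilde q := p + pxp^\perp$ is the sought idempotent: it has the same range as $q$ and diagonal equal to $\id/2$. The only non-bookkeeping steps are the evaluation $\gamma = 1/2$ and the identification of idempotents with prescribed range as the affine set $p + pM_{2n}p^\perp$; no genuine obstacle is anticipated, as all substantive work has been absorbed into Theorem~\ref{perturbationdistance}.
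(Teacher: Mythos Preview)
Your proof is correct and follows exactly the same route as the paper's own argument: take the range projection $p=[q]$, observe that $E(q)$ lies in $E(p+pM_{2n}p^\perp)$, compute $\gamma=1/2$ for $d=\id/2$, and invoke the dichotomy of Theorem~\ref{perturbationdistance}. You have simply made explicit the steps the paper compresses into two sentences --- in particular the verification that $q\in p+pM_{2n}p^\perp$, the calculation of $S$ and $\gamma$, and the elementary inequality $\gamma/\lfloor n/2\rfloor \ge 1/n$.
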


\begin{proof}In this case, the constant $\gamma$ is equal to $1/2$. Let $p$ be the range projection of $q$. Since  $\mbox{dist}(d,E(p+pM_np\sp\perp))< \frac{1}{n}$, Theorem \ref{perturbationdistance} implies that $1/2$ is actually the diagonal of an idempotent $\widetilde{q}$ in $E(p+pM_np\sp\perp)$.
\end{proof}

\section{Connectedness of idempotents with fixed diagonal}

This section is devoted to the proof of Theorem \ref{idempotents}. Like in the previous section, we work with $n\times
n$ matrices and the diagonal conditional expectation $E\colon M_n\to D_n$. But this time, we need to assume that matrices are
taken over the complex field (this assumption is used in Lemma~\ref{complexgrassmannian} only).

Given an idempotent $q$, it will prove convenient to denote $[q]$ its range projection which is given, for instance, by the formula $[q]=q(q+q\sp*-\id)\sp{-1}$.

The key idea in our strategy is to reduce to the case of idempotents for which the algebra $\{[q]\}'\cap D_n$ is trivial. We begin with a simple observation.

\begin{rmk}\label{simpleobs}
The commutative finite-dimensional algebra $\{[q]\}'\cap D_n$ is the span of all diagonal projections which leave the range of $q$ invariant, i.e. those diagonal projections $e$ such that $e[q]=[q]e$ or, equivalently, $q\sp\perp e q=0$.
\end{rmk}

We now proceed to the construction that will allow us to implement the reduction claimed above.

\begin{prop}\label{reduction} Let $q$ be a nontrivial idempotent in $M_n$. If $\mbox{dim}\; \{[q]\}'\cap D_n >1$, then there exists an idempotent $r$ in $M_n$ such that
\begin{enumerate}[(i)]
\item $E(r)=E(q)$;
\item $\{[r]\}'\cap D_n \subsetneq \{[q]\}'\cap D_n$;
\item there is a piecewise affine path consisting of at most two steps from $q$ to $r$ within the set of idempotents with diagonal constant equal to $E(q)$.
\end{enumerate}
\end{prop}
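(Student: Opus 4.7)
Set $p = [q]$ and write the minimal block decomposition $p = \sum_{j=1}^s pf_j$. The hypothesis $\dim\{[q]\}'\cap D_n > 1$ gives $s \geq 2$, and nontriviality of $q$ forces $p \neq 0, \id$. The plan is to produce $r$ as a similarity of an intermediate idempotent $q_1$ using a nilpotent matrix unit $N = E_{ij}$ with $i, j$ in distinct blocks (where $E_{ij}$ denotes the matrix with $1$ in position $(i,j)$ and $0$ elsewhere). The two affine steps will be: first, move $q$ to $q_1$ within the affine set $\{q' : [q'] = p,\ E(q') = d\}$ so that the $(j,i)$-entry of $q_1$ vanishes; second, apply the similarity path $r_t = (1+tN)q_1(1-tN)$, which reduces to $q_1 + t[N, q_1]$ thanks to the vanishing entry and is therefore affine in $t$.

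A direct calculation for $N = E_{ij}$ with $i \neq j$ gives $N^2 = 0$, $NxN = x_{ji} N$, and $E([N, x]) = x_{ji}(E_{ii} - E_{jj})$ for every $x \in M_n$. Hence $(1+tN)x(1-tN) = x + t[N, x] - t^2 x_{ji} N$ is affine in $t$ and diagonal-preserving precisely when $x_{ji} = 0$. To choose suitable indices, set $I := \{m : p_{mm} < 1\}$ and $J := \{m : p_{mm} > 0\}$; both are nonempty since $p \neq \id, 0$. If every pair in $I \times J$ were forced into a single common block $f_{m_0}$, every index outside $f_{m_0}$ would satisfy both $p_{mm} = 0$ and $p_{mm} = 1$, forcing $s = 1$ and contradicting the hypothesis. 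Fix $i \in I \cap f_{k_1}$ and $j \in J \cap f_{k_2}$ with $k_1 \neq k_2$, and set $N := E_{ij}$. For step 1, the direction $Y_* := pE_{ji}p^\perp \in pM_np^\perp$ satisfies $(Y_*)_{mm} = p_{mj}(p^\perp)_{mi} = 0$ for every $m$ (because $p$ is block-diagonal and $i, j$ lie in different blocks), so $E(Y_*) = 0$; moreover $(Y_*)_{ji} = p_{jj}(1 - p_{ii}) \neq 0$. Then $q_1 := q - (q_{ji}/(Y_*)_{ji}) Y_*$ lies in $p + pM_np^\perp$ with $E(q_1) = d$ and $(q_1)_{ji} = 0$; the segment from $q$ to $q_1$ is an affine family of idempotents of range $p$ and diagonal $d$. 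Step 2 then produces $r_t = q_1 + t[N, q_1]$, an affine family of idempotents with diagonal $d$; set $r := r_1$.

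It remains to verify (ii). The range of $r$ is $(1+N)\Ran(p)$. For $m \notin \{k_1, k_2\}$, both $f_m$ and $f_{k_1} + f_{k_2}$ commute with $N$ (as $N$ involves only indices in $f_{k_1} \cup f_{k_2}$), hence with $[r]$. Meanwhile $f_{k_1}$ fails to preserve $(1+N)\Ran(p)$: let $\xi_j$ denote the $j$-th standard basis vector, so that $v := p\xi_j \in \Ran(p)$ is nonzero (from $p_{jj} > 0$); then $(1+N)v = p\xi_j + p_{jj}\xi_i$, so $f_{k_1}(1+N)v = p_{jj}\xi_i$ (the $p\xi_j$ summand vanishes because $f_{k_1}$ commutes with $p$ and annihilates $\xi_j$ for $j \in f_{k_2}$); but $p_{jj}\xi_i \notin (1+N)\Ran(p)$ because its $(1+N)^{-1}$-image is $p_{jj}\xi_i$ itself, and $\xi_i \notin \Ran(p)$ since $p_{ii} < 1$. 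This yields a strict dimension drop in the relative commutant, with merged blocks $\{f_{k_1} + f_{k_2}\} \cup \{f_m : m \neq k_1, k_2\}$ sitting inside $\{[r]\}' \cap D_n$. The main obstacle is proving the remaining direction of the set-theoretic inclusion $\{[r]\}' \cap D_n \subseteq \{[q]\}' \cap D_n$: any diagonal $e \in D_n$ preserving the tilted range must satisfy the rank-one compatibility $p^\perp e p = (e_j - e_i)\,p^\perp E_{ij} p$ (where $e_i, e_j$ now denote the $(i,i)$ and $(j,j)$ diagonal entries of $e$), and one must argue by case analysis that no $e$ outside $\{p\}' \cap D_n$ can accidentally realize such a rank-one configuration.
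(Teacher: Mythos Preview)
Your two-step construction is sound up to the final paragraph, and it differs from the paper's proof in an interesting way: you conjugate by an elementary unipotent $1+E_{ij}$ (after killing the $(j,i)$ entry so the similarity path becomes affine), whereas the paper first ``block-diagonalizes'' $q$ to $\widetilde q = q - (eqe^\perp + e^\perp q e)$ for a nontrivial $e\in\{[q]\}'\cap D_n$ and then perturbs along a direction $y\in \widetilde q^\perp e^\perp M_n \widetilde q e$, which changes the nullspace rather than the range. Both routes give a two-segment affine path with fixed diagonal.

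However, your proof is incomplete exactly where you flag it: you have not established the inclusion $\{[r]\}'\cap D_n \subseteq \{p\}'\cap D_n$. Showing that $f_{k_1}+f_{k_2}$ and the other $f_m$ commute with $[r]$, together with $f_{k_1}\notin\{[r]\}'\cap D_n$, gives only a lower bound and the strictness; it does not rule out a new diagonal projection, not built from the $f_m$, sneaking into the commutant of $[r]$. The promised ``case analysis'' is not carried out, so as written (ii) is unproved.

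The gap closes quickly once you exploit the block structure you already set up. Since $p$ commutes with every $f_m$, the matrix $p^\perp e p$ is block-diagonal with respect to $\{f_m\}$ for \emph{every} diagonal $e$. On the other hand $p^\perp E_{ij}p$ has $(a,b)$ entry $(p^\perp)_{ai}p_{jb}$, which is supported entirely in the off-diagonal block $f_{k_1}\times f_{k_2}$. Hence your compatibility equation
\[
p^\perp e p \;=\; (e_j-e_i)\,p^\perp E_{ij} p
\]
equates a block-diagonal matrix with a block-off-diagonal one, forcing both sides to vanish. Since $p^\perp E_{ij}p\neq 0$ (you chose $p_{ii}<1$ and $p_{jj}>0$), this gives $e_i=e_j$ and $p^\perp e p=0$, i.e.\ $e\in\{p\}'\cap D_n$. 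That finishes (ii) with $r=r_1$.

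For comparison, the paper avoids this computation entirely by a genericity argument: for each diagonal projection $f\notin\{p\}'\cap D_n$ the map $t\mapsto r_t^\perp f r_t$ is a nonzero polynomial of degree at most~$2$, so it vanishes for at most two values of $t$; avoiding finitely many bad $t$ yields the inclusion. Your argument, once completed as above, has the advantage of working at the specific value $t=1$ and of identifying the new minimal blocks explicitly (the $f_{k_1},f_{k_2}$ merge), at the cost of a slightly more delicate setup in Step~1.
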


\begin{proof}According to Remark \ref{simpleobs},  the algebra $\{[q]\}'\cap D_n$ is spanned by the diagonal projections $e$ such that $eq=qeq$. By assumption, we can find one such $e$ that is non trivial. We will construct an idempotent $r$ such that $\{[r]\}'\cap D_n \subsetneq\{[q]\}'\cap D_n$, the inclusion being proper because we will arrange for $e$ not to be in $\{[r]\}'\cap D_n$.\\
The first step is to connect $q$ to an idempotent $\widetilde{q}$ which commutes with $e$ and has same range and diagonal as $q$. Note that this leaves the algebra $\{[q]\}'\cap D_n=\{[\widetilde{q}]\}'\cap D_n$ unchanged and that one passes from $q$ to $\widetilde{q}$ by a straight line segment. To do this, we set $\widetilde{q}:=q-x$ with $x=eqe\sp\perp+e\sp\perp qe$. Since $e$ is a diagonal projection, it is clear that $E(x)=0$ so that $E(\widetilde{q})=E(q)$. Using the identity $eq=qeq$, we first check that $\widetilde{q}e=e\widetilde{q}=eqe$. Then we verify that $qx=x$ and $xq=0$, so that $\widetilde{q}q=q$ and $q\widetilde{q}=\widetilde{q}$, which is the algebraic condition for the idempotents $q$ and $\widetilde{q}$ to have the same range.  \\
Since $q$ is assumed to be non trivial, so is $\widetilde{q}$, i.e. $\widetilde{q}\neq 0$ and $\widetilde{q}\sp\perp\neq 0$. Also, we took $e$ non trivial, i.e. $e\neq 0$ and $e\sp\perp\neq 0$. Now if $\widetilde{q}e=0$, we have $\widetilde{q}e\sp\perp=\widetilde{q}$ and $\widetilde{q}\sp\perp e=e$. Likewise, if $\widetilde{q}\sp\perp e\sp\perp=0$, we find that $\widetilde{q}e\sp\perp=e\sp\perp$ and $\widetilde{q}\sp\perp e=\widetilde{q}\sp\perp$. As a consequence, up to replacing $e$ by $e\sp\perp$, we can further assume that $\widetilde{q}e\neq 0$ and $\widetilde{q}\sp\perp e\sp\perp\neq 0$, so that $\widetilde{q}\sp\perp e\sp\perp M_n \widetilde{q}e\neq \{0\}$. We pick now an element $y\neq 0$ in the latter. Note that $y=e\sp\perp y e = \widetilde{q}\sp\perp y \widetilde{q}$. \\
For the second step, we will exhibit an idempotent $r$ with same nullspace and diagonal as $\widetilde{q}$, and such that $\{[r]\}'\cap D_n \subsetneq \{[\widetilde{q}]\}'\cap D_n$. To this aim, we consider the parametrized family of idempotents given by $r_t:=\widetilde{q}+ty$. Since $y=e\sp\perp y e$, we have $E(y)=0$ hence $E(r_t)=E(\widetilde{q})=d$ for all $t$.  Since $y= \widetilde{q}\sp\perp y \widetilde{q}$, we see that $\widetilde{q}y=y$ and $y\widetilde{q}=0$, hence $r_t$ is an idempotent with the same nullspace as $\widetilde{q}$ for all $t$. Also, for all $t\neq 0$, we observe that $e$ does not belong to $\{[r]_t\}'\cap D_n$, since $r_t\sp\perp e r_t=-ty\neq 0$. So it only remains to find a value of $t\neq 0$ for which $\{[r_t]\}'\cap D_n \subset \{[q]\}'\cap D_n$ and we will suffice to take the corresponding $r_t$ for the desired $r$. Actually, we will show that all but finitely many values of $t$ will do.\\
Let $f$ be a diagonal projection and consider the map $g:t\longmapsto r_t\sp\perp f r_t$. Since each matrix coefficient is a polynomial of degree not greater than 2, $g$ is either constant equal to zero or vanishes for at most two distinct values of $t$. So if $f$ does not belong to $\{[q]\}'\cap D_n$ or, in other terms, if $g(0)\neq 0$, we see that $f$ belongs to $\{[r_t]\}'\cap D_n$ for two values of $t$ at most. Because there are only finitely many diagonal projections, we deduce that for all but finitely many values of $t$, the projections that lie in $\{[r_t]\}'\cap D_n$ also belong to $ \{[q]\}'\cap D_n$, hence, in view of Remark \ref{simpleobs},  $\{[r_t]\}'\cap D_n \subset \{[q]\}'\cap D_n$.
\end{proof}

\begin{lemma}\label{complexgrassmannian}
Let us fix $0<k<n$ and let $G(k,n)$ denote the set of projections with rank $k$ in $M_n(\C)$.
The set $\Omega:=\{p\in G(k,n)\;:\; \{p\}'\cap D_n=\C \id\}$ is an open, dense, pathwise connected subset of $G(k,n)$.
\end{lemma}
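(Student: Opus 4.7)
The plan is to identify the complement $G(k,n) \setminus \Omega$ as a finite union of low-dimensional complex subvarieties of $G(k,n)$ and to deduce the three claims (openness, density, and path-connectedness) from this structural description.

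First I establish openness. Since $D_n$ is a finite-dimensional commutative $\ast$-algebra, so is $\{p\}'\cap D_n$; in particular it is linearly spanned by the diagonal projections it contains, and there are only $2^n$ diagonal projections in $D_n$. Hence $p\in \Omega$ if and only if $[p,e]\neq 0$ for every proper diagonal projection $e$ (meaning $e\neq 0,\id$). Since $p\mapsto [p,e]$ is continuous and there are only finitely many such $e$, writing
\[ G(k,n)\setminus \Omega \;=\; \bigcup_e V_e,\qquad V_e:=\{p\in G(k,n)\;:\;[p,e]=0\}, \]
exhibits the complement as a finite union of closed sets, so $\Omega$ is open.

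Next I analyze each $V_e$. A projection $p$ commutes with a diagonal projection $e$ of rank $m$ if and only if $\Ran p$ decomposes as $(\Ran p\cap \Ran e)\oplus (\Ran p\cap \ker e)$, and this gives
\[ V_e \;=\; \bigsqcup_{k_1} G(k_1,\Ran e)\times G(k-k_1,\ker e), \]
with $k_1$ running over $\max(0,k+m-n)\leq k_1\leq \min(k,m)$. Each piece is a closed complex submanifold of $G(k,n)$, and a direct computation identifies its complex codimension as the sum of nonnegative integers
\[ k_1(n-m-k+k_1)\;+\;(k-k_1)(m-k_1). \]
A short case analysis shows this sum cannot vanish under the hypotheses $0<k<n$ and $0<m<n$: if both products were zero, one would be forced to have $k\in\{0,n\}$ or $m\in\{0,n\}$. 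Therefore each component of $V_e$ has complex codimension $\geq 1$, hence real codimension $\geq 2$, in $G(k,n)$.

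Density is then immediate, since $G(k,n)\setminus \Omega$ is a finite union of proper closed complex subvarieties of the connected complex manifold $G(k,n)$ and hence nowhere dense. For path-connectedness I invoke the standard topological fact that if $M$ is a connected smooth manifold of real dimension $\geq 2$ and $A\subset M$ is a finite union of closed smooth submanifolds each of codimension $\geq 2$, then $M\setminus A$ is path-connected (any continuous path in $M$ joining two points of $M\setminus A$ can be perturbed generically to miss $A$ by transversality). This applies since $G(k,n)$ is connected of real dimension $2k(n-k)\geq 2$. The main obstacle is precisely this last step, and it is where the complex scalar hypothesis enters essentially: in the real case the codimension bound would drop to $\geq 1$, and the removal of a $V_e$ of real codimension one could disconnect the real Grassmannian.
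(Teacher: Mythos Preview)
Your argument is correct and follows essentially the same route as the paper: both decompose $G(k,n)\setminus\Omega$ as a finite union $\bigcup_e V_e$ over proper diagonal projections, stratify each $V_e$ by $\mathrm{rank}(ep)$ into products of smaller Grassmannians, and count dimensions to see each stratum has positive complex codimension. The only notable difference is the final step: the paper deduces path-connectedness of $\Omega$ by citing the general fact (Chirka) that the complement of a proper analytic subset of a connected complex manifold is path-connected, whereas you use a transversality/general-position argument in the smooth category to avoid a finite union of real codimension~$\geq 2$ submanifolds. Both routes are valid; your explicit codimension formula $k_1(n-m-k+k_1)+(k-k_1)(m-k_1)$ and the case check are a bit more detailed than the paper's one-line dimension bound, and your transversality argument makes the role of the complex scalars (doubling real codimension) more transparent, while the paper's citation handles in one stroke the fact that the strata need not all have the same dimension.
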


\begin{proof}As is well-known, the Grassmannian $G(k,n)$ is a connected complex manifold of dimension $k(n-k)$. Now let us take $p$ in $G(k,n)$ and observe that $p$ belongs to $\Omega$ if and only if  there is no nontrivial diagonal projection which commutes with $p$. Hence $G(k,n)\setminus\Omega$ is equal to the union, over all nontrivial projections $e$ in $D_n$, of the subsets $\{p\in G(k,n)\;:\; ep=pe\}$. The latter can in turn be decomposed into the disjoint union of the subsets $F_l=\{p\in G(k,n)\;:\; ep=pe,\,\mbox{rank} ep = l\}$, $l$ running from $0$ to $k$.
Each set $F_l$ can be identified with $G(l,\mbox{rank}\;e)\times G(k-l, n-\mbox{rank}\;e)$, which is a complex manifold of dimension not greater than $k(n-k)-1$.\\
This shows in particular that $G(k,n)\setminus\Omega$ is closed and has empty interior. Being a proper analytic subset of the connected complex manifold $G(k,n)$, this set has pathwise connected complement (cf. Proposition 3 of Section 2.2 in \cite{Ch}).
\end{proof}

Thanks to this result and to the Lemma \ref{keylemma} of the previous section, we will now prove that the sets $\Omega\cap E^{-1}(d)$
are either connected or empty.

\begin{thm}\label{connectednessforirreducible}For every diagonal $d$ in $D_n$, the set of idempotents $q$ such that $E(q)=d$ and $\{[q]\}'\cap D_n=\C\, \id$ is pathwise connected whenever it is not empty.
\end{thm}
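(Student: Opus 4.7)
The plan is to view the set $X_d := \{q\in M_n(\C) : q^2=q,\; E(q)=d,\;\{[q]\}'\cap D_n=\C\id\}$ as the total space of an affine bundle over the base $\Omega\cap G(k,n)$ via the range map $q\mapsto [q]$, where $k := \mbox{Tr}\;d$. Every $q\in X_d$ satisfies $\mbox{rank}\;q = \mbox{Tr}\;E(q) = k$, and the degenerate cases $k\in\{0,n\}$ are trivial (then $X_d\subseteq\{0,\id\}$), so I assume $0<k<n$ and fix $q_0,q_1\in X_d$ to be joined by a path inside $X_d$.

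First I would verify that each fiber of $q\mapsto[q]$ over $\Omega\cap G(k,n)$ is a nonempty convex set. For $p\in\Omega$ the minimal block decomposition of $p$ is trivial ($s=1$, $f_1=\id$), so Theorem~\ref{prescribedrange} guarantees that $d$ is realized as the diagonal of some idempotent with range $p$ precisely when $\mbox{Tr}\;d = \mbox{rank}\;p = k$, which is the case here. The fiber $\{p+x : x\in pM_np^\perp,\; E(x)=d-E(p)\}$ is then affine, hence path-connected.

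Second, I use Lemma~\ref{complexgrassmannian} to choose a continuous path $t\mapsto p_t$ inside $\Omega\cap G(k,n)$ from $[q_0]$ to $[q_1]$. The hard part, and the crux of the argument, is to lift $p_t$ continuously to a family $q_t\in X_d$ with $[q_t]=p_t$. I would do this via the Moore--Penrose pseudoinverse: letting $D_n^0$ denote the trace-zero diagonals, define $\Phi_t\colon p_tM_np_t^\perp\to D_n^0$ by $\Phi_t(x):=E(x)$. Lemma~\ref{keylemma} combined with $\{p_t\}'\cap D_n=\C\id$ identifies the image of $\Phi_t$ with $D_n^0$, so $\Phi_t$ is surjective of constant rank $n-1$; consequently $\Phi_t^+$ varies norm-continuously with $t$, and
\[
q_t := p_t + \Phi_t^+\bigl(d-E(p_t)\bigr)
\]
defines a continuous family of idempotents with $[q_t]=p_t\in\Omega$ and $E(q_t)=d$, so $q_t\in X_d$. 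Note that $d-E(p_t)$ lies in $D_n^0$ because both diagonals have trace $k$, which is what makes the formula well posed.

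Finally, the constructed idempotent at $t=0$ may differ from the prescribed $q_0$, but both lie in the same affine (hence convex) fiber over $p_0$, so a straight line segment inside $X_d$ joins them; the same correction at $t=1$ and concatenation yield the desired path from $q_0$ to $q_1$. The main obstacle is the continuous lifting: everything hinges on the constant-rank property of $\Phi_t$, which is precisely what the hypothesis $\{[q]\}'\cap D_n=\C\id$ supplies via Lemma~\ref{keylemma}.
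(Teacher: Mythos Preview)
Your proposal is correct and follows essentially the same route as the paper: connect the range projections by a path in $\Omega$ via Lemma~\ref{complexgrassmannian}, lift it continuously using a right inverse of $x\mapsto E(p_txp_t^{\perp})$ (the paper writes this as $D_t^*(D_tD_t^*)^{-1}$, which is exactly your Moore--Penrose pseudoinverse), and then adjust at the endpoints by straight segments within the affine fibers. The only cosmetic difference is that the paper defines its map $D_t$ on all of $M_n$ rather than on the varying subspace $p_tM_np_t^{\perp}$, which makes the continuity of the pseudoinverse a bit cleaner to state; you might want to phrase $\Phi_t$ the same way.
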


\begin{proof}Let $q$ and $r$ be two idempotents in the set under consideration, if not empty. By Lemma \ref{complexgrassmannian} with $k=\mbox{rank}\;q$,
we can find a projection-valued path $p_t$ connecting $[q]$ and $[r]$ within $\Omega$, i.e. such that $\{p_t\}'\cap
D_n=\C\, \id$ for all $t$. Then it follows from Lemma \ref{keylemma} that for all $t$, $E(p_tM_np_t\sp\perp)$ is equal
to $D_n\cap \mbox{Ker\;Tr}$, the orthogonal complement of $\C\, \id$ in $D_n$ with respect to the Hilbert-Schmidt inner
product. Hence each operator
$$D_t\colon M_n\to D_n=D_n\cap \mbox{Ker\;Tr}\;\oplus \C\, \id\;,\quad x\longmapsto E(p_txp_t\sp\perp)$$
is such that $D_tD_t\sp*$ realizes an isomorphism from $D_n\cap \mbox{Ker\;Tr}$ onto itself. Thus
$$C_t:=D_t\sp*(D_tD_t\sp*)\sp{-1}:D_n\cap \mbox{Ker\;Tr}\to M_n$$
defines a continuous path of right inverses for $D_t$, seen as operators from  $M_n$ to $D_n\cap \mbox{Ker\;Tr}$.\\
Now consider the path $x_t:=C_t(d-E(p_t))$ in $M_n$, which is continuous and satisfies $E(p_tx_tp_t\sp\perp)=d-E(p_t)$ for all $t$. Setting $q_t:=p_t+p_tx_tp_t\sp\perp$, we obtain an idempotent-valued path within the desired set, from $q_0=[q]$ to $q_1=[r]$. Since $E$ is linear, it only remains to connect the latter to $q$ and $r$ respectively by straight line segments and we are done.
\end{proof}

\begin{proof}[Proof of Theorem \ref{idempotents}]Let $q$ and $r$ be idempotents with diagonal $d$. By Proposition \ref{reduction}, we can connect
them, within a finite number of affine steps in the set of idempotents with diagonal $d$, to two idempotents,
$\widetilde{q}$ and $\widetilde{r}$ respectively, such that, moreover, $\{[q]\}'\cap D_n=\{[r]\}'\cap D_n=\C\, \id$.
The latter pair can now be connected by Theorem~\ref{connectednessforirreducible} and the proof is complete.
\end{proof}

\section{Extensions to infinite dimensions}\label{infsection}

In this section we extend some of the preceding results to operators on a separable Hilbert space $\HS$ over complex or
real scalars. Recall that by Theorem~\ref{halfproj} the set of projections with $1/2$ on the diagonal is path-connected
in $M_{2n}(\C)$ or in $M_{2n}(\R)$ for $n>1$. Theorem~\ref{pavableproj} is a partial extension of this result to
$B(\HS)$ equipped with the operator norm topology. To state it we need the following definition: an operator $x\in
B(\HS)$ with $\norm{x}=1$ is \emph{$2$-pavable} if there exists a diagonal projection $e$ such that $\norm{exe}<1$ and
$\norm{e^{\perp}xe^{\perp}}<1$. Note that for any projection $p$ the operator $2p-\id$ has norm $1$; in fact, it is a
symmetry (i.e., self-adjoint unitary). See~\cite{CEKP07} for recent results on paving of projections.

\begin{thm}\label{pavableproj} The projections $p\in B(\HS)$ such that $E(p)=\id/2$ and
$2p-\id$ is pavable are pathwise connected within the set of all
projections with diagonal $\id/2$.
\end{thm}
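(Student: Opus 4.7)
The plan is to extend the argument of Theorem \ref{halfproj} to infinite dimensions, with the paver $e$ of $2p-\id$ playing the role of the fixed direct-sum decomposition of $\HS$.

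First, given $p$ with $E(p)=\id/2$ and with $2p-\id$ paved by some diagonal projection $e$, decompose $\HS=e\HS\oplus e^\perp\HS$ and write $p$ in $2\times 2$ block form $\bigl(\begin{smallmatrix}a & b\\ b^* & d\end{smallmatrix}\bigr)$. The paving conditions $\|e(2p-\id)e\|<1$ and $\|e^\perp(2p-\id)e^\perp\|<1$ are equivalent to $\|2a-\id\|<1$ and $\|2d-\id\|<1$, forcing the spectra of $a$ and $d$ strictly inside $(0,1)$. By the analysis in Section \ref{prelimproj}, $|b^*|^2 = a(\id-a)$ and $|b|^2 = d(\id-d)$ are then invertible, so $b$ has trivial kernel and cokernel. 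The polar decomposition $b=u|b|$ produces an honest unitary $u\colon e^\perp\HS\to e\HS$, bypassing the trace argument that was used in the proof of Theorem \ref{halfproj} to match the two kernel dimensions.

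Next I would repeat the $a_t$-deformation from that proof: set $a_t := (1-t)a+(t/2)\id$ and
$$p_t := \begin{pmatrix} a_t & \sqrt{a_t(\id-a_t)}\,u \\ u^*\sqrt{a_t(\id-a_t)} & u^*(\id-a_t)u\end{pmatrix}.$$
Convexity yields $\|2a_t-\id\|<1$ for all $t\in[0,1]$, so $p_t$ remains paved by $e$, has diagonal $\id/2$ throughout, and reaches the canonical form $p_1 = \tfrac12\bigl(\begin{smallmatrix}\id & u\\ u^* & \id\end{smallmatrix}\bigr)$. Since the existence of the unitary $u$ forces $\dim e\HS = \dim e^\perp\HS$, in the nontrivial infinite-dimensional case (the finite-dimensional case is handled directly by Theorem \ref{halfproj}) both spaces are infinite-dimensional. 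By Kuiper's theorem \cite{Ku65}, as already observed in Remark \ref{extendinfinite}, the corresponding unitary group is contractible in the norm topology; I can therefore connect $u$ to any chosen reference unitary $u_\natural\colon e^\perp\HS\to e\HS$ by a norm-continuous path, which lifts to a path of projections with diagonal $\id/2$ (still paved by $e$) ending at the canonical projection $p^e_\natural := \tfrac12\bigl(\begin{smallmatrix}\id & u_\natural\\ u_\natural^* & \id\end{smallmatrix}\bigr)$.

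The last step, which I expect to be the main technical obstacle, is to connect the canonical projections $p^e_\natural$ and $p^{e'}_\natural$ corresponding to different pavers. My plan is to fix a universal reference $p_\star = \tfrac12(\id+V)$, where $V$ is a self-adjoint unitary with zero diagonal implementing a chosen pairing of basis vectors, and to show that every $p^e_\natural$ can be connected to $p_\star$ within $\{p:E(p)=\id/2\}$. The idea is to locate a diagonal projection $f$ that simultaneously paves both $p^e_\natural$ and $p_\star$; the preceding two steps, applied with the common paver $f$, then yield a path between them. Producing such an $f$ is a combinatorial matching problem on the basis: in the tractable case where $e$ and the paver of $p_\star$ agree off a finite subset of the basis, one may restrict to a finite-dimensional summand and invoke Theorem \ref{halfproj} directly, and the general case is to be handled by an exhaustion argument leveraging the norm continuity of the deformations from the previous steps, possibly after first using Kuiper once more to arrange the reference unitary $u_\natural$ so that $p^e_\natural$ admits the target paver $f$.
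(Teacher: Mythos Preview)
Your first two steps are correct and coincide with the paper's argument: the paving condition forces the off-diagonal block $b$ to be invertible, so Remark~\ref{extendinfinite} applies directly in the $e$-decomposition and carries $p$ to a canonical $\tfrac12\bigl(\begin{smallmatrix}\id & u\\ u^* & \id\end{smallmatrix}\bigr)$, after which Kuiper lets you choose $u$ freely.

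The gap is in your third step, and it is a real one. Looking for a diagonal projection $f$ that simultaneously \emph{paves} $2p^e_\natural-\id$ and $2p_\star-\id$ is asking for too much: paving is a strict norm inequality on both compressions, and there is no reason two given self-adjoint unitaries with zero diagonal should share a paver. Your ``exhaustion'' sketch does not supply one, and the finite-subset reduction you mention does not obviously close up in norm. The paper avoids this difficulty by never asking for a second paver. It fixes once and for all a diagonal projection $f$ of infinite rank and nullity, with target $p_0=\tfrac12\bigl(\begin{smallmatrix}\id & \id\\ \id & \id\end{smallmatrix}\bigr)$ in the $f$-blocks, and then constructs an intermediate projection $p_1=\tfrac12(\id+\sigma)$ where $\sigma$ is a zero-diagonal basis involution chosen so that infinitely many of its $2$-cycles lie entirely inside $\Ran(ef)$ and infinitely many lie entirely inside $\Ran(e^\perp f^\perp)$. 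This combinatorial choice guarantees that, in \emph{both} the $e$-decomposition and the $f$-decomposition, the off-diagonal block of $p_1$ has infinite-dimensional kernel and cokernel. Then Remark~\ref{extendinfinite} --- which only needs $\dim\ker b=\dim\ker b^*$, not paving --- connects $p$ to $p_1$ in the $e$-blocks and $p_1$ to $p_0$ in the $f$-blocks.

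In short: replace ``find a common paver'' by ``build an intermediate permutation-type projection whose off-diagonal block has equal (infinite) kernel and cokernel dimensions in both decompositions''. Your freedom in choosing $u_\natural$ is exactly what lets you do this, but the right invariant to match is the kernel-dimension condition of Remark~\ref{extendinfinite}, not pavability.
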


\begin{proof} Let $f$ be a diagonal projection with infinite rank and nullity. Given a projection $p$ as in the statement,
we must find a path (within the set of projections with diagonal $\id/2$) from $p$ to the block matrix
\[p_0 =\begin{pmatrix}
\id/2 & \id/2 \\ \id/2 & \id/2
\end{pmatrix}\]
in which the blocks correspond to $\Ran f$ and $\Ran f^{\perp}$.
Let $e$ be a projection that paves $2p-\id$. Note that $e$ has infinite rank and nullity.
Replacing $e$ with $e^{\perp}$
if necessary, we can ensure that both $ef$ and $e^{\perp}f^{\perp}$ have infinite rank.
Let $\{e_i\}_{i\in\N}$ be the standard basis of $\HS$. Let $\sigma\in B(\HS)$ be a zero-diagonal involution that
acts by permuting the basis elements so that (i) $\{e_i,\sigma e_i\}\subset \Ran (ef)$ for infinitely
many values of $i\in\N$ and (ii) $\{e_i,\sigma e_i\}\subset \Ran (e^{\perp}f^{\perp})$ for infinitely
many values of $i\in\N$. Let us write the projection $p_1:=(\id+\sigma)/2$ in the block form
\begin{equation}\label{pavable1}
p_1=\begin{pmatrix}
a_1 & b_1 \\ b_1^* & d_1
\end{pmatrix}
\end{equation}
with respect to the decomposition $\HS=\Ran f\oplus\Ran f^{\perp}$. The block $b_1$ has infinite-dimensional
kernel which contains all vectors $e_i+\sigma e_i$ such that $\{e_i,\sigma e_i\}\subset \Ran f^{\perp}$. Similarly,
the kernel of $b_1^*$ contains all vectors $e_i+\sigma e_i$ such that $\{e_i,\sigma e_i\}\subset \Ran f$.
By Remark~\ref{extendinfinite} the projection $p_1$ can be connected by an appropriate path to $p_0$.

Now let $p_1$ be represented as in~\eqref{pavable1} but with respect to decomposition $\HS=\Ran e\oplus\Ran e^{\perp}$.
Replacing $f$ with $e$ in the preceding paragraph, we again find that $b_1$ and $b_1^*$ have infinite dimensional kernels.
Writing $p$ in block form with the same decomposition $\HS=\Ran e\oplus\Ran e^{\perp}$, we obtain
\[p=\begin{pmatrix}
a & b \\ b^* & d
\end{pmatrix}\]
with $\norm{2a-\id}<1$ and $\norm{2d-\id}<1$. It follows that both $a(\id-a)$ and $d(\id-d)$ are invertible. As was noted in section~\ref{prelimproj},
this implies the invertibility of $b$ (and $b^*$).
By Remark~\ref{extendinfinite} the projection $p$ can be connected by an appropriate path to $p_1$, and we are done.
\end{proof}

Recall that $B(\HS)$ is the dual of $S_1$, the set of trace-class operators on $\HS$.
This duality induces $w^*$-topology on $B(\HS)$. Recall the definition of the minimal block decomposition of an operator from section~\ref{sec22}.

\begin{thm}\label{infdiag} Given a projection $p\in B(\HS)$,
let $\sum_{j\in J}pf_j$ be its minimal block decomposition. The $w^*$-closure of the set
\[\{\E(q)\colon q^2=q,\ \Ran q=\Ran p\}\] consists of all operators $d\in D(\HS)$
such that $\tr(df_j)=\rk(pf_j)$ whenever $f_j$ has finite rank.
\end{thm}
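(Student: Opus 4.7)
The plan is to reduce the statement to a Hahn--Banach duality computation in $D\subset B(\HS)$. Since the idempotents with $\Ran q=\Ran p$ form the affine set $p+pB(\HS)p^\perp$ (as recorded at the start of Section~5), the set whose $w^*$-closure we seek is $E(p)+V$ with $V:=E(pB(\HS)p^\perp)$ a linear subspace of $D$. So I would first determine $\overline{V}^{w^*}$ and then translate by $E(p)$, using the identity $\tr(E(p)f_j)=\tr(pf_j)=\rk(pf_j)$ valid whenever $f_j$ has finite rank (the last equality because $pf_j$ is a projection, as $f_j\in\{p\}'\cap D$).

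For the easy containment, take an idempotent $q=p+pxp^\perp$ and a finite-rank $f_j$. Then $f_j\in S_1$ commutes with $p$, so by cyclicity of the trace
\[
\tr((q-p)f_j)=\tr(pxp^\perp f_j)=\tr(p^\perp f_j p\cdot x)=0,
\]
hence $\tr(E(q)f_j)=\tr(qf_j)=\tr(pf_j)=\rk(pf_j)$. Since $f_j\in S_1$, the functional $d\mapsto\tr(df_j)$ is $w^*$-continuous, so this identity passes to the $w^*$-closure.

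For the reverse inclusion I would invoke the bipolar theorem, $\overline{V}^{w^*}=V^{\perp\perp}$, in the duality $\langle B(\HS),S_1\rangle$. The identity $\tr(y\,E(z))=\tr(E(y)\,z)$ (a consequence of both sides equaling $\sum_i y_{ii}z_{ii}$) shows $y\in V^\perp$ iff $\tr(p^\perp E(y)p\cdot x)=0$ for all $x\in B(\HS)$, i.e.\ iff $p^\perp E(y)p=0$; splitting $E(y)$ into self-adjoint real and imaginary parts shows this is equivalent to $E(y)$ commuting with $p$. Next, testing against $y\in S_1$ with $E(y)=0$ forces $V^{\perp\perp}\subset D$, and then the condition on $z\in V^{\perp\perp}$ becomes $\tr(gz)=0$ for every $g\in S_1\cap\{p\}'\cap D$. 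Since $\{p\}'\cap D$ is the $w^*$-closed span of the $f_j$ and a diagonal combination $\sum c_jf_j$ is trace class exactly when $\sum|c_j|\rk(f_j)<\infty$ (forcing $c_j=0$ whenever $\rk f_j=\infty$), this reduces to $\tr(zf_j)=0$ for every finite-rank $f_j$. Translating by $E(p)$ then yields the claimed set.

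The main delicacy is the duality bookkeeping: reducing $V^{\perp\perp}$ to a subset of $D$ (so that one only has to test against diagonal trace-class $g$) and correctly identifying the trace-class elements of $\{p\}'\cap D$ in the presence of possibly infinite-rank $f_j$. Once those points are clean, the characterization drops out of routine $\ell^1$--$\ell^\infty$ duality on the index set $J$ of the minimal block decomposition.
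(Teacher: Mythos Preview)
Your reduction is exactly the paper's: write the set as $E(p)+V$ with $V=E(pB(\HS)p^\perp)$, compute the $w^*$-closure of $V$ by duality, and translate.  The easy containment, the passage $V^{\perp\perp}\subset D$, and your identification of $S_1\cap\{p\}'\cap D$ with the $\ell^1$-span of the finite-rank $f_j$ are all correct and match (indeed flesh out) what the paper leaves implicit.

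There is, however, a genuine gap at the step where you pass from $p^\perp E(y)p=0$ to ``$E(y)$ commutes with $p$''.  Writing $c:=E(y)=a+ib$ with $a,b$ self-adjoint diagonal, the equation $p^\perp ap+ip^\perp bp=0$ does \emph{not} split into $p^\perp ap=0$ and $p^\perp bp=0$: the operators $p^\perp ap$ and $p^\perp bp$ are in general neither self-adjoint nor real-entried, so there is no real/imaginary separation to invoke.  (Over $\R$ the issue disappears because $c$ is already self-adjoint, but over $\C$ it does not.)  What is actually needed is the implication $p^\perp cp=0\Rightarrow p^\perp c^*p=0$ for $c\in D\cap S_1$, and the paper supplies precisely this: since $c$ is trace-class its spectrum is a null sequence, so $\sigma(c)$ has empty interior and connected complement, and Mergelyan's theorem gives polynomials $P_n$ with $P_n(c)\to c^*$ in norm; invariance of $\Ran p$ under $c$ then passes to $c^*$.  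Once you plug this approximation argument in place of the real/imaginary split, your proof goes through and coincides with the paper's.
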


Since the idempotents $q$ in Theorem~\ref{infdiag} are all of the form $p+pxp^{\perp}$,
the conclusion of Theorem~\ref{infdiag} can be deduced from the following lemma.

\begin{lemma}\label{infrangediag} Let $p\in B(\HS)$ be a projection. Define $\mathcal{D}_p\colon B(\HS)\to B(\HS)$
by $\mathcal{D}_p(x)=E(pxp^{\perp})$. The $w^*$-closure of $\Ran \mathcal{D}_p$ is the space
\begin{equation}\label{ranged1}
\{d\in D(\HS) \colon \tr(dc)=0  \ \forall c\in\{p\}'_D\cap S_1\}.
\end{equation}
\end{lemma}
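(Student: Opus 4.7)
The plan is to exploit the duality $B(\HS)=(S_1)^*$. By the bipolar theorem, $\overline{\Ran \mathcal{D}_p}^{w^*}$ equals the annihilator in $B(\HS)$ of the pre-annihilator $(\Ran \mathcal{D}_p)_\perp \subset S_1$. Since $D(\HS)$ is a von Neumann subalgebra of $B(\HS)$, it is $w^*$-closed, and the inclusion $\Ran \mathcal{D}_p \subseteq D(\HS)$ automatically forces the $w^*$-closure into $D(\HS)$. Thus it will suffice to match the trace-pairing conditions on diagonal operators $d$.

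To describe $(\Ran \mathcal{D}_p)_\perp$, I would unwind the trace pairing. Since $\mathcal{D}_p(x) = E(pxp^\perp)$ is diagonal, $\tr(\mathcal{D}_p(x)c) = \tr(\mathcal{D}_p(x)E(c))$; combining the module property of $E$ with $\tr\circ E=\tr$ on $S_1$, together with cyclicity of the trace, collapses this to $\tr(p^\perp E(c)p \cdot x)$. Because $p^\perp E(c)p$ is trace class, this vanishes for every $x \in B(\HS)$ exactly when $p^\perp E(c)p = 0$, i.e.\ when $\Ran p$ is invariant under the diagonal compact operator $E(c)$.

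The crux, and the main obstacle, is to upgrade invariance of $\Ran p$ under $E(c)$ to commutation of $E(c)$ with $p$; this is the infinite-dimensional replacement of the Lagrange interpolation step in Lemma~\ref{keylemma}. Here $E(c) \in D\cap S_1$ is compact and normal, so its nonzero spectrum consists of isolated eigenvalues of finite multiplicity. For $z$ in the resolvent of $E(c)$, the operator $z-E(c)$ restricts to a bijection of $\Ran p$: injectivity is immediate, and failure of surjectivity would make $z$ a point-spectrum value of $E(c)|_{\Ran p}$, hence of $E(c)$ itself. Consequently $(z-E(c))^{-1}$ preserves $\Ran p$, and integrating around a small contour $\gamma_\mu$ about each nonzero eigenvalue $\mu$ shows that the Riesz spectral projection $P_\mu = \tfrac{1}{2\pi i}\oint_{\gamma_\mu}(z-E(c))^{-1}\,dz$ also preserves $\Ran p$. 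Decomposing any $v \in \Ran p$ as $v = v_0 + \sum_{\mu\neq 0} P_\mu v$ with $v_0 \in \ker E(c)$, every summand lies in $\Ran p$, so $\Ran p$ is reducing for $E(c)$ and $E(c)$ commutes with $p$.

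With this in hand, $(\Ran \mathcal{D}_p)_\perp = \{c \in S_1 : E(c) \in \{p\}'_D \cap S_1\}$. For $d \in D(\HS)$ the identity $\tr(dc) = \tr(d\,E(c))$ shows that $d$ annihilates this set if and only if $\tr(dc') = 0$ for every $c' \in \{p\}'_D \cap S_1$, which is precisely the stated description. Apart from the compact-normal reducing-subspace step, the rest is a systematic unwinding of the duality.
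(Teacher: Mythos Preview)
Your proof is correct and follows the same duality/bipolar skeleton as the paper: show $\Ran\mathcal{D}_p$ sits inside the $w^*$-closed set $N$, then identify the pre-annihilator in the predual as $\{p\}'_D\cap S_1$. The paper streamlines the bookkeeping by working from the start with diagonal $c\in D\cap S_1$ (the predual of the von Neumann algebra $D(\HS)$), which avoids carrying $E(c)$ through; your route via general $c\in S_1$ and the identity $\tr(dc)=\tr(d\,E(c))$ reaches the same place.

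The genuine difference is in the ``crux'' step, upgrading $p^{\perp}cp=0$ to $cp=pc$ for a diagonal trace-class $c$. The paper observes that $\sigma(c)$ is a compact set with empty interior and connected complement, invokes Mergelyan's theorem to approximate $\bar z$ uniformly on $\sigma(c)$ by polynomials, and hence approximates $c^*$ in norm by polynomials in $c$; from $p^{\perp}P_n(c)p=0$ one then gets $p^{\perp}c^*p=0$, and taking adjoints yields commutation. Your argument instead exploits compactness directly: the Fredholm alternative forces $(z-E(c))|_{\Ran p}$ to be bijective for every nonzero $z$ in the resolvent, so each Riesz projection $P_\mu$ with $\mu\neq 0$ preserves $\Ran p$, and since these $P_\mu$ are self-adjoint they commute with $p$, whence $E(c)=\sum_{\mu\neq 0}\mu P_\mu$ does too. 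Both are valid; your version trades a complex-approximation theorem for elementary spectral theory of compact operators, while the paper's version is closer in spirit to the Lagrange-interpolation trick used in the finite-dimensional Lemma~\ref{keylemma}. One small suggestion: make explicit that the passage from ``each $P_\mu$ preserves $\Ran p$'' to ``$\Ran p$ is reducing'' uses the self-adjointness of $P_\mu$, since invariance alone does not imply reduction even for normal operators.
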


\begin{proof} Let $N$ be the space in~\eqref{ranged1}.
First we prove that $\Ran \mathcal{D}_p\subseteq N$. If $d = \mathcal{D}_p(x)$ for some
$x\in B(\HS)$, then for each $c\in\{p\}'_D\cap S_1$ we have
\[
\tr(dc)=\tr(pxp^{\perp}c) = \tr(p^{\perp}cpx) = \tr (p^{\perp}pcx)=0
\]
which means that $d \in N$.

Next, suppose that $c\in D(\HS)\cap S_1$ annihilates $\Ran \mathcal{D}_p$.
This means that for any $x\in B(\HS)$ we have $\tr(pxp^{\perp}c)=0$. Since
\[
\tr(pxp^{\perp}c)=\tr(p^{\perp}cpx), \quad \forall x\in B(\HS),
\]
it follows that
\begin{equation}\label{infrd01}
p^{\perp}cp=0,
\end{equation}
i.e., $\Ran p$ is invariant under $c$. Using continuous functional
calculus, we can write $c^*=F(c)$, where $F(z)=\bar z$ on the spectrum $\sigma(c)$.
Note that $\sigma(c)$ has empty interior and connected complement. By Mergelyan's theorem
there exists a sequence of polynomials $P_n$ such that $P_n\to F$ uniformly on $\sigma(c)$.
It follows from~\eqref{infrd01} that $p^{\perp} P_n(c)p=0$ for all $n$. Letting $n\to\infty$,
we obtain $p^{\perp}c^*p=0$. Taking adjoints, we find that $pc=pcp$. Since $pcp=cp$ by~\eqref{infrd01},
$c$ and $p$ commute. Thus $c$ annihilates $N$.
\end{proof}

\begin{rmk} In general, $\Ran \mathcal{D}_p$ is not $w^*$-closed.
Indeed, if $p$ or $p^{\perp}$ has finite rank, then $\Ran \mathcal{D}_p$ is contained in $S_1$, although it may be
$w^*$-dense in $D(\HS)$. Therefore, Theorem~\ref{infdiag} does not completely describe the possible diagonals of
idempotents with a given range. The difficulty of obtaining such a description can be illustrated by the following
fact: there exists a nonzero idempotent with zero diagonal~\cite[Theorem 3.7]{La85}.
\end{rmk}

Concerning the connectedness of idempotents sharing the same diagonals, we have the following result
which generalizes Proposition~\ref{reduction}. Recall that $\{[r]\}'_D$ means the range of the diagonal expectation $E$ restricted
to the commutant of the range projection of $r$,
i.e $\{[r]\}'_D=\{[r]\}'\cap D(\HS)$.

\begin{prop}\label{unblock} For any idempotent $q\in B(\HS)\setminus \{0, 1\}$ there exists an idempotent $r\in B(\HS)$ such that
$\{[r]\}'_D=\C\, \id$ and there is a piecewise linear path from $q$ to $r$ within the set of idempotents with diagonal
$\E(q)$.
\end{prop}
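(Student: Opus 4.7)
The plan is to construct the path in two straight-line segments, mirroring the structure of Proposition~\ref{reduction} but with the second segment designed to collapse $\{[r]\}'_D$ all the way to $\C\,\id$ in a single step. Iterating Proposition~\ref{reduction} cannot work in infinite dimensions because $\{[q]\}'_D$ may be infinite-dimensional, whereas each iteration only strictly shrinks it.

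A useful preliminary observation is that if two idempotents share either their range or their nullspace, the straight segment between them consists of idempotents: for common range the identities $q_1q_2=q_2$ and $q_2q_1=q_1$ give $((1-t)q_1+tq_2)^2=(1-t)q_1+tq_2$, and the nullspace case follows by applying this to $\id-q_i$. This makes two ``affine moves'' available.

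First segment. Let $p=[q]$ and write $q=p+pxp^{\perp}$. Let $\{g_j\}_{j\in J}$ denote the minimal projections of the atomic commutative algebra $\{p\}'_D\subset D(\HS)$; they partition the standard basis because $D(\HS)$ is an atomic MASA. Define $S:=\sum_j g_j x g_j$, which converges in the strong operator topology to a bounded operator with $\|S\|\leq\|x\|$, since the vectors $g_j x g_j v$ lie in the orthogonal subspaces $\Ran g_j$ with $\sum_j\|g_jxv\|^2\leq\|x\|^2\|v\|^2$. Set $\tilde q=p+pSp^{\perp}$. Using $[p,g_k]=0$ and $[S,g_k]=0$ (the latter a direct consequence of $g_jg_k=\delta_{jk}g_k$), one checks that $\tilde q$ is an idempotent with $[\tilde q]=p$ that commutes with every $g_j$, and $E(\tilde q)=E(q)$ because the off-block pieces $g_ixg_j$ $(i\neq j)$ contribute zero to the diagonal. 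The first linear segment is then the straight line from $q$ to $\tilde q$.

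Second segment. Seek $r=\tilde q+y$ with $y\in\tilde q^{\perp}B(\HS)\tilde q$ and $E(y)=0$: the identities $\tilde q y=0$, $y\tilde q=y$, $y^2=0$ then imply that $r$ is an idempotent sharing the nullspace and the diagonal of $\tilde q$, so the straight segment from $\tilde q$ to $r$ lies in the desired slice. The range of $r$ is the graph $\{u+yu:u\in\Ran\tilde q\}$, and the condition $\{[r]\}'_D=\C\,\id$ is equivalent to the requirement that no nontrivial coordinate subspace $\overline{\spn}\{e_i:i\in S\}$, $\emptyset\neq S\subsetneq\N$, is $[r]$-invariant.

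The main obstacle is constructing such a $y$ breaking every such invariance simultaneously. Since $D(\HS)$ is atomic these conditions form a countable family indexed by subsets $S\subset\N$. I would handle this either (i) by an explicit construction of $y$ as a norm-convergent sum $\sum_k\alpha_k w_k$, where each $w_k\in\tilde q^{\perp}B(\HS)\tilde q$ is an off-diagonal rank-one operator that destroys one enumerated invariance condition, and $(\alpha_k)\in\ell^1$ ensures norm convergence, or (ii) by a Baire-category argument in the Banach space $N:=\{y\in\tilde q^{\perp}B(\HS)\tilde q:E(y)=0\}$: for each $S$, the bad subset $\{y\in N:\overline{\spn}\{e_i:i\in S\}\text{ is }[\tilde q+y]\text{-invariant}\}$ is closed and nowhere dense, so their countable union is meager and a generic $y\in N$ yields an $r$ with $\{[r]\}'_D=\C\,\id$.
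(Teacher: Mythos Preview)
Your first segment is fine and essentially coincides with the paper's Lemma~\ref{unblockl1}: the map $q\mapsto\sum_j g_jqg_j$ produces an idempotent $\tilde q$ with the same range and diagonal as $q$ that lies in $(\{[q]\}'_D)'$, and the straight line from $q$ to $\tilde q$ stays in the correct slice.

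The second segment, however, has a genuine gap. You assert that ``since $D(\HS)$ is atomic these conditions form a \emph{countable} family indexed by subsets $S\subset\N$.'' This is false: there are $2^{\aleph_0}$ subsets of $\N$, hence uncountably many diagonal projections, and both of your proposed mechanisms collapse at exactly this point. In route~(i) you cannot enumerate the invariance conditions and knock them out one at a time with a norm-convergent $\sum_k\alpha_kw_k$; in route~(ii) the Baire category theorem gives nothing for an uncountable union of closed nowhere-dense sets. (One can reindex the \emph{goal} countably, e.g.\ by requiring $1\stackrel{[r]}{\sim}i$ for each $i\ge 2$, but then the complementary ``bad'' sets are no longer obviously closed, and showing density of the good sets still requires a nontrivial perturbation construction that you have not supplied. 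You also have not argued that the bad set for a \emph{fixed} $S$ is nowhere dense, which is itself nontrivial.)

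The paper avoids the uncountability issue by a different device. It builds a single operator $n\in\tilde q^{\perp}B(\HS)\tilde q$ with $E(n)=0$ whose off-block components $f_kq^{\perp}x_{kl}qf_l$ are chosen nonzero whenever both $q^{\perp}f_k\ne 0$ and $qf_l\ne 0$, and then considers the \emph{one-parameter} family $q_t=\tilde q+tn$. Because the range projection $[q_t]=(q_t)(q_t+q_t^*-\id)^{-1}$ is real analytic in $t$, Lemma~\ref{unblockl2} says its minimal block decomposition is the same for all $t$ outside a countable set. If that generic decomposition were nontrivial, some fixed diagonal projection $f$ would commute with $[q_t]$ for \emph{every} $t$, and matching the coefficient of $t$ in $(q_t)^{\perp}fq_t=0$ forces $fnf^{\perp}=f^{\perp}nf=0$, contradicting the construction of $n$. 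Thus analyticity converts an uncountable family of forbidden projections into a single algebraic identity in $t$, which is exactly the step your proposal is missing.
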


The proof is preceded by two lemmas.

\begin{lemma}\label{unblockl1} For any idempotent $q\in B(\HS)$ there exists an idempotent $\tilde q\in (\{[q]\}'_D)'$ such that
$\Ran \tilde q=\Ran q$ and $\E(\tilde q)=\E(q)$.
\end{lemma}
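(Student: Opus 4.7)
The plan is to build $\tilde q$ by compressing $q$ to the block structure induced by $A := \{[q]\}'_D$. The key initial observation is that $A$ is an abelian von Neumann subalgebra of the atomic MASA $D(\HS)$, and any such subalgebra is itself atomic: its atoms $\{f_j\}_{j \in J}$ form an at most countable family of mutually orthogonal diagonal projections with $\sum_j f_j = \id$ strongly, and each $f_j$ commutes with $p := [q]$.

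I would then define
\[ \tilde q := \sum_{j \in J} f_j q f_j, \]
with convergence in the strong operator topology. Since the ranges $\Ran f_j$ are mutually orthogonal, for every $v \in \HS$ one has $\sum_j \norm{f_j q f_j v}^2 \le \norm{q}^2 \norm{v}^2$, so the partial sums are Cauchy in norm on each vector; in particular $\norm{\tilde q} \le \norm{q}$. The single algebraic identity needed is $q f_j q = f_j q$: since $f_j$ commutes with $p$ and $pq = q$, we have $f_j q v = p f_j q v \in \Ran p = \Ran q$, and $q$ is the identity on $\Ran q$.

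From this identity the required properties fall out. For idempotency, cross-terms $f_j q f_j \cdot f_k q f_k$ vanish for $j \ne k$, while $(f_j q f_j)^2 = f_j (q f_j q) f_j = f_j (f_j q) f_j = f_j q f_j$, so $\tilde q^2 = \tilde q$. For the range, each summand lies in $\Ran p$; conversely, for $v \in \Ran p$ one has $f_j v \in \Ran(p f_j) \subseteq \Ran p$, which is fixed by $q$, giving $\tilde q v = \sum_j f_j v = v$. For the diagonal, $\E(q)$ is diagonal and so commutes with each $f_j$, hence $\E(\tilde q) = \sum_j f_j \E(q) f_j = \E(q)$. Finally, $f_k \tilde q = f_k q f_k = \tilde q f_k$ for each atom, and since $\{f_k\}$ generates $A$, we get $\tilde q \in A'$.

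The only real complication beyond the finite-dimensional case is that $J$ may be infinite, so $\tilde q$ is a genuine SOT limit rather than a finite sum; but the mutual orthogonality of $\{\Ran f_j\}$ keeps all relevant sums convergent, and the algebraic manipulations carry through verbatim from the finite case.
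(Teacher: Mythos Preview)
Your proof is correct and follows essentially the same route as the paper: both define $\tilde q$ as the block-diagonal compression $\sum_j f_j q f_j$ with respect to the atoms of $\{[q]\}'_D$ and verify the range, diagonal, and commutation properties via the identity $q f_j q = f_j q$ (the paper's displayed formula $\sum_j q f_j$ is a typo for this expectation, as their own computation $\tilde q f_j = f_j q f_j$ makes clear). Your explicit handling of SOT convergence when $J$ is infinite is a detail the paper leaves implicit.
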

\begin{proof} Let $\{f_j\}_{j\in J}$ be the set of minimal projections in $\{[q]\}'_D$.
Let $\tilde q=\sum_{j\in J} qf_j$ be the expectation of $q$ with respect to the block-diagonal
algebra $(\{[q]\}'_D)'$.
It is easy to see that $\tilde q$ is an idempotent and $\E(\tilde q)=\E(q)$. Since $q\tilde q=\tilde q$ and $\tilde q q=q$,
we have $\Ran \tilde q=\Ran q$.  Finally, $\tilde qf_j=f_jqf_j=f_j\tilde q$ for all $j\in J$, which means $\tilde q\in (\{[q]\}'_D)'$.
\end{proof}

\begin{lemma}\label{unblockl2} Suppose that $t\mapsto x(t)$ is a real analytic map from $\R$ to $B(\HS)$.
Then there exists a countable set $C\subset \R$ such that all operators $x(t)$, $t\in\R\setminus C$, have
the same minimal block decomposition.
\end{lemma}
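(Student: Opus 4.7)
The plan is to reduce everything to the scalar matrix coefficients and invoke the standard fact that a real analytic function of one real variable is either identically zero or has a discrete (hence countable) zero set.

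First I would fix an orthonormal basis $\{e_i\}_{i\in I}$ of $\HS$; since $\HS$ is separable, the index set $I$ is countable. For each pair $(i,j)\in I\times I$ set $x_{ij}(t):=\ip{e_i,x(t)e_j}$. This is the composition of the real analytic map $t\mapsto x(t)$ with the bounded linear functional $T\mapsto\ip{e_i,Te_j}$, so $x_{ij}\colon\R\to\C$ is itself real analytic.

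The main step is the dichotomy: for each $(i,j)$ either $x_{ij}\equiv 0$, or the zero set $Z_{ij}:=\{t\in\R: x_{ij}(t)=0\}$ is a discrete, hence countable, subset of $\R$. Partition $I\times I$ into $I_0:=\{(i,j):x_{ij}\equiv 0\}$ and $I_+:=(I\times I)\setminus I_0$, and put
\[C:=\bigcup_{(i,j)\in I_+}Z_{ij}.\]
Since $I\times I$ is countable and each $Z_{ij}$ is countable, $C$ is a countable union of countable sets, hence countable.

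For every $t\in\R\setminus C$ the set $\{(i,j):e_ix(t)e_j\ne 0\}$ is exactly $I_+$, independent of $t$. Consequently the relation $\stackrel{x(t)}{\sim}$ defined in Section~\ref{sec22} --- the smallest equivalence relation on $I$ containing all such pairs --- is the same for every such $t$, and so are its equivalence classes. The corresponding central projections $\{f_j\}_{j\in J}$ in $D$ therefore coincide for all $t\in\R\setminus C$, which is precisely the assertion that the minimal block decomposition is the same. I do not anticipate a serious obstacle here; the only point that requires care is the separability of $\HS$, which keeps $I\times I$ countable and ensures that the exceptional set $C$ remains countable.
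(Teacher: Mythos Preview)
Your proof is correct and follows essentially the same approach as the paper: both arguments pass to the scalar matrix entries $x_{ij}(t)$, invoke the real-analytic dichotomy (identically zero versus countable zero set), and then observe that the set of nonzero entries determines the minimal block decomposition. Your version is simply a more detailed write-up of the paper's short paragraph, with the separability point and the countability bookkeeping made explicit.
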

\begin{proof} Each entry of the matrix representing $x(t)$ in the canonical basis of $\HS$ is a real-analytic
function of $t$. Recall that a scalar-valued real-analytic function has at most countably many zeroes
unless it vanishes identically. Therefore, the set of nonzero entries in the matrix of $x(t)$
is the same for all but countably many values of $t$. Since the set of nonzero entries
determines the minimal block decomposition, the claim follows.
\end{proof}

\begin{proof}[Proof of Proposition~\ref{unblock}] Let $\{f_j\}_{j\in J}$ be the set of minimal projections in $\{[q]\}'_D$.
By virtue of Lemma~\ref{unblockl1} we may assume that
\begin{equation}\label{unblock8}
q\in (\{[q]\}'_D)', \quad \text{i.e., }q=\sum_{j\in J}f_jqf_j.
\end{equation}
For $k,l \in J$ we set $y_{k l}=f_k q^{\perp}x_{k l} qf_l$, where
$x_{k l}\in B(\HS)$ is chosen as follows.
If either $k=l$, $q^{\perp}f_k=0$, or $qf_l=0$, then set $x_{k l}=0$. Otherwise, choose $x_{k l}$
so that $0<\|y_{k l}\|< 2^{-k-l}$. Let $n=\sum_{k,l\in J} y_{k l}$.

One can easily check that $qn=0$, $nq=n$, and $\E(n)=0$. Therefore, $q_t:=q+tn$ is an idempotent
for all $t\in\R$, and $\E(q_t)=\E(q)$. The range projection of $q_t$,
\[[q_t]=(q+tn)\left(q+q^*+t(n+n^*)-\id\right)^{-1}\]
is real analytic in $t$. By Lemma~\ref{unblockl2} $[q_t]$ has the same minimal block
decomposition for all $t\in\R\setminus C$ where $C$ is countable. If this decomposition consists of just one
block, then we can set $r=q+tn$ for some $t\in\R\setminus C$.

Suppose that the minimal block decomposition of $[q_t]$, $t\in\R\setminus C$, is nontrivial.
Then there exists a diagonal projection $f\notin\{0,\id\}$ that commutes with $[q_t]$ for all $t\in \R\setminus C$,
hence for all $t\in\R$. This can be expressed as
\begin{equation}\label{unblock10}
(q^{\perp}-tn)f(q+tn)=0,\quad t\in\R.
\end{equation}
The coefficient of $t$ in~\eqref{unblock10} must be zero, hence
\[
q^{\perp}fn-nfq=0.
\]
Since $f\in {[q]}'_D$, we have $f=\sum_{j\in K}f_j$ for some $K\subset J$.
Also, $f$ commutes with $q$ due to~\eqref{unblock8}. Thus we obtain
\[
0=q^{\perp}fn-nfq=fq^{\perp}n-nqf=fn-nf=fnf^{\perp}-f^{\perp}nf,
\]
hence $fnf^{\perp}=f^{\perp}nf=0$. From the definition of $n$ one can see that $fnf^{\perp}=0$
only if
\begin{equation}\label{unblock12}
q^{\perp}f=0\text{ \ or \ }qf^{\perp}=0.
\end{equation}
Similarly, $f^{\perp}nf=0$ implies
\begin{equation}\label{unblock13}
q^{\perp}f^{\perp}=0\text{ \ or \ }qf=0.
\end{equation}
Since $q,f\notin\{0,\id\}$, the relations~\eqref{unblock12}--\eqref{unblock13} contradict each other.
This completes the proof.
\end{proof}

\end{document}